%%
%% Copyright 2007-2020 Elsevier Ltd
%%
%% This file is part of the 'Elsarticle Bundle'.
%% ---------------------------------------------
%%
%% It may be distributed under the conditions of the LaTeX Project Public
%% License, either version 1.2 of this license or (at your option) any
%% later version.  The latest version of this license is in
%%    http://www.latex-project.org/lppl.txt
%% and version 1.2 or later is part of all distributions of LaTeX
%% version 1999/12/01 or later.
%%
%% The list of all files belonging to the 'Elsarticle Bundle' is
%% given in the file `manifest.txt'.
%%

%% Template article for Elsevier's document class `elsarticle'
%% with numbered style bibliographic references
%% SP 2008/03/01
%%
%%
%%
%% $Id: elsarticle-template-num.tex 190 2020-11-23 11:12:32Z rishi $
%%
%%
\documentclass[preprint,12pt]{elsarticle}

%% Use the option review to obtain double line spacing
%% \documentclass[authoryear,preprint,review,12pt]{elsarticle}

%% Use the options 1p,twocolumn; 3p; 3p,twocolumn; 5p; or 5p,twocolumn
%% for a journal layout:
%% \documentclass[final,1p,times]{elsarticle}
%% \documentclass[final,1p,times,twocolumn]{elsarticle}
%% \documentclass[final,3p,times]{elsarticle}
%% \documentclass[final,3p,times,twocolumn]{elsarticle}
%% \documentclass[final,5p,times]{elsarticle}
%% \documentclass[final,5p,times,twocolumn]{elsarticle}

%% For including figures, graphicx.sty has been loaded in
%% elsarticle.cls. If you prefer to use the old commands
%% please give \usepackage{epsfig}

%% The amssymb package provides various useful mathematical symbols
%\usepackage{amssymb}
%% The amsthm package provides extended theorem environments
%% \usepackage{amsthm}

%% The lineno packages adds line numbers. Start line numbering with
%% \begin{linenumbers}, end it with \end{linenumbers}. Or switch it on
%% for the whole article with \linenumbers.
%% \usepackage{lineno}

\usepackage[T1]{fontenc}
\usepackage{geometry}
\usepackage{graphicx}%
\usepackage{amsmath,amssymb,amsfonts}%
\usepackage{amsthm}%
\usepackage{mathrsfs}%
\usepackage[title]{appendix}%
\usepackage{xcolor}%
\usepackage{textcomp}%
\usepackage{manyfoot}%
\usepackage{booktabs}%
\usepackage{algorithm}%
\usepackage{algorithmicx}%
\usepackage{algpseudocode}%
\usepackage{listings}%
\usepackage{indentfirst}
\usepackage{lineno}
\usepackage{mathrsfs}
\usepackage{array}
\usepackage{float}
\usepackage{subfigure}
\usepackage[justification=centering]{caption}
\usepackage{verbatim}
\usepackage{setspace}
\usepackage{makecell}
\usepackage{textcomp}
\usepackage{multirow}%

\usepackage{bm}
\usepackage{url}
\usepackage{hyperref}

\newcommand{\tensor}[1]{\bm{\mathcal{#1}}}
%%\unnumbered% uncomment this for unnumbered level heads

\numberwithin{equation}{section}
\newtheorem{thm}{Theorem}

\newtheorem{lem}{Lemma}

\raggedbottom

% \journal{Applied Numerical Mathematics}

\begin{document}

\begin{frontmatter}

%% Title, authors and addresses

%% use the tnoteref command within \title for footnotes;
%% use the tnotetext command for theassociated footnote;
%% use the fnref command within \author or \address for footnotes;
%% use the fntext command for theassociated footnote;
%% use the corref command within \author for corresponding author footnotes;
%% use the cortext command for theassociated footnote;
%% use the ead command for the email address,
%% and the form \ead[url] for the home page:
%% \title{Title\tnoteref{label1}}
%% \tnotetext[label1]{}
%% \author{Name\corref{cor1}\fnref{label2}}
%% \ead{email address}
%% \ead[url]{home page}
%% \fntext[label2]{}
%% \cortext[cor1]{}
%% \affiliation{organization={},
%%             addressline={},
%%             city={},
%%             postcode={},
%%             state={},
%%             country={}}
%% \fntext[label3]{}

\title{HOSCF: Efficient decoupling algorithms for finding the best rank-one approximation of higher-order tensors}

%% use optional labels to link authors explicitly to addresses:
%% \author[label1,label2]{}
%% \affiliation[label1]{organization={},
%%             addressline={},
%%             city={},
%%             postcode={},
%%             state={},
%%             country={}}
%%
%% \affiliation[label2]{organization={},
%%             addressline={},
%%             city={},
%%             postcode={},
%%             state={},
%%             country={}}

\author[mymainaddress,myfirstaddress]{Chuanfu Xiao}
	\ead{chuanfuxiao@pku.edu.cn}

 \author[mysecondaddress]{Zeyu Li}
    \ead{li\_zeyu@pku.edu.cn}

 \author[mymainaddress,myfirstaddress]{Chao Yang}
	\ead{chao\_yang@pku.edu.cn}

 \address[mymainaddress]{School of Mathematical Sciences, Peking University, Beijing 100871, China}
 \address[myfirstaddress]{PKU-Changsha Institute for Computing and Digital Economy, Changsha 410205, China}
 \address[mysecondaddress]{Yuanpei College, Peking University, Beijing 100871, China}

% \author{Chuanfu Xiao}

% \affiliation{organization={School of Mathematics Sciences, Peking University},%Department and Organization
%             %addressline={},
%             city={Beijing},
%             postcode={100871},
%             %state={},
%             country={China}}

\begin{abstract}
Best rank-one approximation is one of the most fundamental tasks in tensor computation. In order to fully exploit modern multi-core parallel computers,
%As the scale of tensors gradually increases in practical applications,
it is necessary to develop decoupling algorithms for computing the best rank-one approximation of higher-order tensors at large scales.
%finding the best rank-one approximation of tensors that is more suitable for modern multi-core parallel computers.
%However, existing decoupling algorithms suffer from slow or even non-convergence.
In this paper, we first build a bridge between the rank-one approximation of tensors and the eigenvector-dependent nonlinear eigenvalue problem (NEPv), and then develop an efficient decoupling algorithm, namely the higher-order self-consistent field (HOSCF) algorithm, inspired by the famous self-consistent field (SCF) iteration frequently used in computational chemistry.
The convergence theory of the HOSCF algorithm and an estimation of the convergence speed are further presented.
In addition, we propose an improved HOSCF (iHOSCF) algorithm that incorporates the Rayleigh quotient iteration, which can significantly accelerate the convergence of HOSCF. Numerical experiments show that the proposed algorithms can efficiently converge to the best rank-one approximation of both synthetic and real-world tensors and can scale with high parallel scalability on a modern parallel computer.

\end{abstract}

%%Graphical abstract
% \begin{graphicalabstract}
% %\includegraphics{grabs}
% \end{graphicalabstract}

%%Research highlights
% \begin{highlights}
% \item Research highlight 1
% \item Research highlight 2
% \end{highlights}

\begin{keyword}
%% keywords here, in the form: keyword \sep keyword
rank-one approximation \sep decoupling algorithm \sep self-consistent-field iteration \sep eigenvector-dependent nonlinear eigenvalue problem \sep parallel scalability

%% PACS codes here, in the form: \PACS code \sep code

%% MSC codes here, in the form: \MSC code \sep code
%% or \MSC[2008] code \sep code (2000 is the default)
\MSC[2010] 15A18 \sep 15A69 \sep 15A72 \sep 65F15 \sep 68W10

\end{keyword}

\end{frontmatter}

%% \linenumbers

%% main text

%\maketitle

\section{Introduction}\label{sec:section1}
%\red{Introduced the rank-one approximation of tensors and its importance.}
Tensor is a natural representation tool for high-dimensional data, such as time-varying data, numerical simulation data, hyperspectral images, etc. With the help of tensor decompositions, tensor has been playing a central role in many application fields \cite{Bro2006, Kolda2009, Kroonenberg2008, Khoromskij2012, cichocki2016tensor, cichocki2017tensor, Sidiropoulos2017}. For example, CANDECOMP/PARAFAC (CP) and Tucker decompositions, as high-dimensional generalizations of matrix singular value decomposition (SVD), are commonly used to exploit the hidden information in high-dimensional data analysis \cite{Hitchcock1927,Tucker1966,Levin1963,Kroonenberg1980,henrion1994n,kiers2001three}, while tensor train (TT) and hierarchical Tucker (HT) decompositions can help alleviate the curse of dimensionality that occurs in scientific computing \cite{Oseledets2009,Hackbusch2009,Grasedyck2010,Oseledets2011,Hackbusch2014}.

Due to the existence of noise in real-world data, the low-rank approximation problem corresponding to tensor decomposition is often considered in practical applications. Among them, finding the best rank-one approximation of higher-order tensors is one of the most fundamental tasks \cite{de2000best,zhang2001rank,hu2018convergence}, which has many applications in fields such as wireless communications, signal processing, and pattern recognition \cite{decurninge2020tensor,de1996independent,shi2013multi,shi2019rank}. Furthermore, algorithms for the rank-one approximation problem can also be used to compute the rank-$R$ approximation of the tensor via a greedy strategy, using methods such as the greedy rank-one update algorithm \cite{devore1996some,leibovici1997decomposition}. It has been proved that the approximation error decays exponentially with the increase of the rank-one term \cite{qi2011best}.

There are a series of algorithms for solving the best rank-one approximation of higher-order tensors, among which the higher-order power method (HOPM) \cite{de2000best} is one of the most popular choices. It was pointed out that the HOPM algorithm is equivalent to the alternating least squares (ALS) method, and its convergence has been proved \cite{uschmajew2015a} and further analyzed \cite{hu2018convergence}. Alternating singular value decomposition (ASVD) is another important algorithm for the rank-one approximation of higher-order tensors \cite{de2000best,friedland2013best}. Compared with the HOPM algorithm, the ASVD algorithm updates two factors simultaneously by calculating the largest singular pair of the intermediate matrix, which usually has a faster convergence speed  \cite{guan2018convergence}. Essentially, both HOPM and ASVD algorithms belong to the class of nonlinear block Gauss-Seidel coordinate descent methods, and the tensor-times-vector chain (TTVc) operation used to update the factors is their main computational cost. However, since the updates of these factors are interdependent within each iteration of HOPM and ASVD, the TTVc operations cannot be executed in parallel, which is the main reason for the low parallel efficiency of these algorithms \cite{tritsiklis1989comparison,ortega2000iterative,vrahatis2003linear,ahmadi2021parallel}.
Therefore, it is necessary to design an efficient algorithm that can decouple the updates of factors, i.e., a \textit{decoupling algorithm} that is more suitable for modern multi-core parallel computers.

A simple way to design decoupling algorithms is to convert the block Gauss-Seidel iterations in the HOPM and ASVD algorithms into the Jacobi-style iterative forms, which we refer to as Jacobi-HOPM and Jacobi-ASVD, respectively in this paper. Although the Jacobi-HOPM and Jacobi-ASVD can decouple the updates of factors during the iterations, we will show later in Section \ref{sec:section6} that they break the monotonic convergence property and thus often fail to converge to the best rank-one approximation, especially for higher-order tensors. Another decoupling algorithm is the generalized Rayleigh quotient iteration (GRQI) \cite{zhang2001rank}, which was proposed from the perspective of the Karush-Kuhn-Tucker (KKT) equation corresponding to the rank-one approximation of tensors.
Although GRQI is shown to be quadratically convergent \cite{zhang2001rank}, it generally does not converge to the best rank-one approximation without an appropriate initial guess, due largely to its relatively small region of convergence.
%its ignorance of the maximization requirement during the iterations.
Overall, most existing decoupling algorithms for best rank-one approximation of higher-order tensors suffer from the issues of slow or non-convergence in practice.

In this paper, we first show that finding the best rank-one approximation of higher-order tensors is theoretically equivalent to solving the largest eigenpair of an eigenvector-dependent nonlinear eigenvalue problem (NEPv), a reformulation of the KKT equation. Based on this observation, we propose an efficient decoupling algorithm called the higher-order self-consistent field (HOSCF) for the best rank-one approximation of higher-order tensors, inspired by the famous self-consistent field (SCF) iteration frequently used in computational chemistry \cite{roothaan1951new,martin_2004,saad2010numerical}. Compared to existing decoupling algorithms, the proposed HOSCF algorithm can theoretically guarantee to converge to the best rank-one approximation.
To demonstrate the parallel efficiency of HOSCF, we also implement it on a modern parallel computer, and numerical results show that it can achieve high parallel scalability.
%And its convergence theory is also established in this paper.
Moreover, combined with Rayleigh quotient iteration, an improved HOSCF (iHOSCF) algorithm is further proposed to accelerate the convergence of HOSCF, and its convergence speed is comparable to that of HOPM and ASVD.

%In this paper, we develop an efficient decoupling algorithm called the higher-order self-consistent field (HOSCF), which is also based on the KKT equation. Combined with Rayleigh quotient iteration, an improved HOSCF (iHOSCF) algorithm is further proposed to accelerate the convergence of HOSCF. Actually, the design of HOSCF is inspired by the insight that finding the best rank-one approximation of tensors is theoretically equivalent to solving the largest eigenpair of an eigenvector-dependent nonlinear eigenvalue problem (NEPv), which is a reformulation of the KKT equation. Therefore, the SCF iteration algorithm \cite{saad2010numerical,martin_2004} for NEPv can be used to solve the best rank-one approximation of tensors, i.e., HOSCF. We further establish the convergence theory of HOSCF based on the general convergence framework of the SCF iteration provided by Cai \emph{et al.} \cite{cai2018eigenvector}, which shows that the HOSCF enjoys local $q$-linear convergence. Moreover, to demonstrate the parallel efficiency of the proposed decoupling algorithm, HOSCF, we implement it on a modern parallel computer, and numerical experiments show that it has good parallel scalability and high performance.

The organization of the remainder of this paper is as follows. In Section \ref{sec:section2}, we introduce some basic notations of higher-order tensors used in this paper and the rank-one approximation problem. Section \ref{sec:section3} builds a bridge between the rank-one approximation of tensors and NEPv. The HOSCF algorithm is presented in Section \ref{sec:section4}. In Section \ref{sec:section5}, we establish the convergence theory of HOSCF and propose the iHOSCF algorithm combined with Rayleigh quotient iteration. Test results on several numerical experiments are reported in Section \ref{sec:section6}. The paper is concluded in Section \ref{sec:section7}.

\section{Preliminaries}\label{sec:section2}
In this section, we give some preliminaries about higher-order tensors. Let $\bm{\mathcal{A}}\in\mathbb{R}^{I_{1}\times I_{2}\cdots\times I_{d}}$
be a $d$th-order tensor, and $\bm{\mathcal{A}}_{i_{1},i_{2},\cdots,i_{d}}$ be its $(i_{1},i_{2},\cdots,i_{d})$-th element. We denote the Frobenius norm of $\tensor{A}$ as $\|\tensor{A}\|_{F}$, which is defined as $\sqrt{\sum\limits_{i_{1},i_{2},\cdots,i_{d}}\tensor{A}_{i_{1},i_{2},\cdots,i_{d}}^{2}}$.

The mode-$n$ matricization is an operation that reshapes the tensor $\tensor{A}$ into a matrix $\bm{A}_{(n)}\in\mathbb{R}^{I_{n}\times(I_{1}\cdots I_{n-1}I_{n+1}\cdots I_{d})}$ along the $n$ mode ($n\in\{1,2,\cdots,d\}$). Specifically, there is a mapping relationship between the $(i_{1},i_{2},\cdots,i_{d})$-th element of $\tensor{A}$ and the $(i_{n},j)$-th element of $\bm{A}_{(n)}$, where
\[
j = 1 + \sum\limits_{k=1,k\neq n}^{d}(i_{k}-1)J_{k}\ \ \text{with}\ \ J_{k}=\prod\limits_{l=1,l\neq n}^{k-1}I_{l}.
\]
Tensor-times-vector (TTV) is a basic operation in tensor computation. Suppose that $\bm{v}\in\mathbb{R}^{I_{n}}$ is a vector, the multiplication of $\tensor{A}$ and $\bm{v}$ along the mode-$n$ is denoted as $\tensor{A}\times_{n}\bm{v}$, which is a $(d-1)$th-order tensor in $\mathbb{R}^{I_{1}\cdots\times I_{n-1}\times I_{n+1}\cdots\times I_{d}}$. Elementwisely,
\[
[\tensor{A}\times_{n}\bm{v}]_{i_{1},\cdots i_{n-1},i_{n+1},\cdots, i_{d}} = \sum\limits_{i_{n}=1}^{I_{n}}\tensor{A}_{i_{1},\cdots, i_{n},\cdots, i_{d}}\cdot\bm{v}_{i_{n}}.
\]
Further, the TTV chain (TTVc) operation is defined as the multiplication of $\bm{\mathcal{A}}$ and multiple vectors along corresponding modes. For example, let
$\bm{v}^{\{n_{1},n_{2},\cdots,n_{K}\}}=\{\bm{v}^{(n_{k})}\in\mathbb{R}^{I_{n_{k}}}:k=1,2,\cdots,K\}$ be a set consisting of $K$ vectors, and $n_{k}\in\{1,2,\cdots,d\}$ for all $k=1,2,\cdots,K$, then $\bm{\mathcal{A}}\times_{n_{1}}\bm{v}^{(n_{1})}\times_{n_{2}}\bm{v}^{(n_{2})}\cdots\times_{n_K}\bm{v}^{(n_{K})}$ is a TTVc operation, which is also denoted as
$
\bm{\mathcal{A}}\times_{n_{1},n_{2},\cdots,n_{K}}\bm{v}^{\{n_{1},n_{2},\cdots,n_{K}\}}
$ in this paper.
%. Specifically, we denote the TTVc operation
%\[
%\bm{\mathcal{A}}\times_{1}\bm{v}^{(1)}\cdots\times_{m-1}\bm{v}^{(m-1)}\times_{m+1}\bm{v}^{(m+1)}
%\]

Unlike the case of matrices, the definition of rank for higher-order tensors is not unique. Despite this, the various definitions of tensor rank, such as the CP-rank, the multilinear rank (i.e., Tucker-rank), and the TT-rank, all correspond to the same form in the rank-one case. Suppose that  $\tensor{A}$ is a rank-one tensor, then it can be represented as an outer product of $d$ vectors, i.e.,
\begin{equation}\label{eq:rank-one}
    \tensor{A} = \bm{u}^{(1)}\circ\bm{u}^{(2)}\cdots\circ\bm{u}^{(d)},
\end{equation}
where $\bm{u}^{(n)}\in\mathbb{R}^{I_{n}}$ for all $n=1,2,\cdots,d$. Specifically, the rank-one tensor $\bm{\mathcal{A}}$ has the entries
\[
\tensor{A}_{i_{1},i_{2},\cdots, i_{d}} = \bm{u}^{(1)}_{i_{1}}\bm{u}^{(2)}_{i_{2}}\cdots\bm{u}^{(d)}_{i_{d}}.
\]
In addition, based on the rank-one representation \eqref{eq:rank-one}, the corresponding basic operations on the tensor $\tensor{A}$ can be simplified. For example, the Frobenius norm of $\tensor{A}$ is equivalent to the product of the norm of these $d$ vectors, the mode-$n$ matricization of $\tensor{A}$ can be expressed as $\bm{u}^{(n)}\left(\bm{u}^{(d)}\cdots\otimes\bm{u}^{(n+1)}\otimes\bm{u}^{(n-1)}\cdots\otimes\bm{u}^{(d)}\right)^{T}$, and the multiplication of $\tensor{A}$ and $\bm{v}$ along the mode-$n$ can be reformulated by
\[
\tensor{A}\times_{n}\bm{v} = \left(\bm{v}^{T}\bm{u}^{(n)}\right)\cdot\bm{u}^{(1)}\cdots\circ\bm{u}^{(n-1)}\circ\bm{u}^{(n+1)}\cdots\circ\bm{u}^{(d)},
\]
which is still a rank-one tensor and only requires a vector inner product operation.

The rank-one approximation of higher-order tensors is one of the most fundamental problems in tensor computation.
Formally, for a $d$th-order tensor $\tensor{A}$,  the rank-one approximation problem can be written as
\begin{equation}\label{eq:rank-one approx-1}
    \min\limits_{\bm{u}^{(1)},\bm{u}^{(2)},\cdots,\bm{u}^{(d)}}\|\tensor{A} - \bm{u}^{(1)}\circ\bm{u}^{(2)}\cdots\circ\bm{u}^{(d)}\|_{F},
\end{equation}
which is a nonlinear and non-convex optimization problem, and solving it is NP-hard \cite{hillar2013most}. By normalizing the factor vectors, problem \eqref{eq:rank-one approx-1} is equivalent to the following spherical-constrained optimization problem
\begin{equation}\label{eq:rank-one approx-2}
\begin{split}
\max\limits\limits_{\bm{u}^{(1)},\bm{u}^{(2)},\cdots,\bm{u}^{(d)}}\tensor{A}\times_{1}\bm{u}^{(1)}\times_{2}\bm{u}^{(2)}\cdots\times_{d}\bm{u}^{(d)}\\
\text{s.t.}\ \ \|\bm{u}^{(n)}\|_{2} = 1\ \text{for all}\ n=1,2,\cdots,d.
\end{split}
\end{equation}
The KKT equation corresponding to optimization problem \eqref{eq:rank-one approx-2} is as follows
\begin{equation}\label{eq:kkt}
\left\{\begin{array}{c}
\tensor{A}\times_{2}\bm{u}^{(2)}\times_{3}\bm{u}^{(3)}\cdots\times_{d}\bm{u}^{(d)} = \lambda\bm{u}^{(1)},\\
\tensor{A}\times_{1}\bm{u}^{(1)}\times_{3}\bm{u}^{(3)}\cdots\times_{d}\bm{u}^{(d)}= \lambda\bm{u}^{(2)},\\
 \vdots \\    \tensor{A}\times_{1}\bm{u}^{(1)}\times_{2}\bm{u}^{(2)}\cdots\times_{d-1}\bm{u}^{(d-1)} = \lambda\bm{u}^{(d)},\\    \tensor{A}\times_{1}\bm{u}^{(1)}\times_{2}\bm{u}^{(2)}\cdots\times_{d}\bm{u}^{(d)} = \lambda.\\
 \end{array}
 \right.
\end{equation}
Generally, if $\left(\lambda_{*};\bm{u}^{(1)}_*,\bm{u}^{(2)}_*,\cdots,\bm{u}^{(d)}_*\right)$ satisfies the KKT equation \eqref{eq:kkt}, it is called a singular pair of the tensor $\tensor{A}$ \cite{lim2005singular}. Further, if $\lambda_{*}\cdot\bm{u}^{(1)}_*\circ\bm{u}^{(2)}_*\cdots\circ\bm{u}^{(d)}_*$ is the best rank-one approximation of $\tensor{A}$, then $|\lambda_{*}|$ is also called the spectral norm of $\tensor{A}$ \cite{hillar2013most}.

\section{A bridge between the rank-one approximation and NEPv}\label{sec:section3}

The eigenvector-dependent nonlinear eigenvalue problem (NEPv) aims to find $\bm{X}\in\mathbb{R}^{n\times r}$ with orthogonal columns and $\bm{\Lambda}\in\mathbb{R}^{r\times r}$ such that
\begin{equation}\label{eq:nepv}
    \bm{H}(\bm{X})\bm{X} = \bm{X}\bm{\Lambda},
\end{equation}
where $\bm{H}(\bm{X})\in\mathbb{R}^{n\times n}$ is a symmetric matrix-valued function of $\bm{X}$, and $\bm{\Lambda} = \bm{X}^{T}\bm{H}(\bm{X})\bm{X}$ and its eigenvalues are generally the
$r$ largest or smallest eigenvalues of $\bm{H}(\bm{X})$.
In this section, we present an equivalence theorem that shows that finding the best rank-one approximation of tensors is equivalent to solving the largest eigenpair of the NEPv, i.e., a reformulation of the KKT equation \eqref{eq:kkt}, which builds a bridge between the rank-one approximation of tensors and NEPv. Unlike the previous work \cite{bai2022variational}, our equivalence theorem is established based on the KKT equation \eqref{eq:kkt} corresponding to the rank-one approximation, and thus is applicable to tensors of any order.
%we establish the connection between the rank-one approximation of tensors and NEPv based on a reformulation of the KKT equation \eqref{eq:kkt}, which is applicable to tensors of any order.

For convenience, we denote $\mathbb{S}^{I-1}$ as  the unit sphere in $\mathbb{R}^{I}$ and $\text{Sym}\left(I\right)$ as the set of $I\times I$ real symmetric matrices.
Let $\tensor{A}\in\mathbb{R}^{I_1\times I_2\cdots\times I_d}$ be a $d$th-order tensor, and we define a mapping from $\mathbb{S}^{\sum\limits_{n=1}^{d}I_{n}-1}$ to $\text{Sym}\left(\sum\limits_{n=1}^{d}I_{n}\right)$ as follows,
\begin{equation}\label{eq:map}
    \mathcal{J}:\ \bm{x}\ \rightarrow\ \bm{J}(\bm{x})=\frac{1}{d-1}\left[\begin{array}{cccc}
        \bm{0} & \bm{A}_{1,2}(\bm{x}) & \cdots & \bm{A}_{1,d}(\bm{x})\\
        \bm{A}_{1,2}^{T}(\bm{x}) & \bm{0}& \cdots & \bm{A}_{2,d}(\bm{x}) \\
        \vdots & \vdots & \ddots & \vdots \\
        \bm{A}_{1,d}^{T}(\bm{x}) & \bm{A}_{2,d}^{T}(\bm{x})& \cdots & \bm{0}
    \end{array}\right],
\end{equation}
where $\bm{A}_{m,n}(\bm{x})\in\mathbb{R}^{I_{m}\times I_{n}}$ represents the matrix
\begin{equation}\label{eq:submatrix}
\tensor{A}\times_{1}\bm{u}^{(1)}\cdots\times_{m-1}\bm{u}^{(m-1)}\times_{m+1}\bm{u}^{(m+1)}\cdots\times_{n-1}\bm{u}^{(n-1)}\times_{n+1}\bm{u}^{(n+1)}\cdots\times_{d}\bm{u}^{(d)},
\end{equation}
and
\begin{equation}\label{eq:factor}
    \bm{u}^{(n)} =\left\{\begin{array}{cc}
    \frac{\bm{x}_{\texttt{index}_{n}}}{\|\bm{x}_{\texttt{index}_{n}}\|_{2}}, & \|\bm{x}_{\texttt{index}_{n}}\|_{2}\neq0 \\
    \bm{0}, & \|\bm{x}_{\texttt{index}_{n}}\|_{2}=0
\end{array}\right. \text{with}\  \texttt{index}_{n}=\sum\limits_{k=1}^{n-1}I_{k}+1:\sum\limits_{k=1}^{n}I_{k}
\end{equation}
for all $n=1,2,\cdots,d$. Based on the defined mapping $\mathcal{J}$, we can introduce the following NEPv,
\begin{equation}\label{eq:nepv-1}
    \bm{J}(\bm{x})\bm{x}=\lambda\bm{x},
\end{equation}
which is a special case of NEPv \eqref{eq:nepv}, i.e., $r=1$, and its properties are shown in Theorem \ref{thm:nepv-property}.

\begin{thm}\label{thm:nepv-property}
Let $\bm{x}\in\mathbb{S}^{\sum\limits_{n=1}^{d} I_n-1}$ be a unit vector,
%and $\{\bm{u}^{(n)}:n=1,2,\cdots,d\}$ be the same as Eq. \eqref{eq:factor},
the defined NEPv \eqref{eq:nepv-1} has the following two properties.\\
(P1) If $(\mu;\bm{y})$ is an eigenpair of $\bm{J}(\bm{x})$, then $(-\mu;\hat{\bm{y}})$ is also an eigenpair of $\bm{J}(\hat{\bm{x}})$, where
\begin{equation}\label{eq:symmetry}
        \hat{\bm{x}}_{\texttt{index}_n} = \left\{\begin{array}{cc}
        -\bm{x}_{\texttt{index}_n}, & n=1, \\
        \bm{x}_{\texttt{index}_n}, & n\neq1,
    \end{array}\right.
    \end{equation}
where $\texttt{index}_n$ is the same as Eq. \eqref{eq:factor}.
Further, $\mu$ is the largest magnitude eigenvalue of $\bm{J}(\bm{x})$ if and only if $\mu$ ($-\mu$) is the largest eigenvalue of $\bm{J}(\bm{x})$ ($\bm{J}(\hat{\bm{x}})$).\\
(P2) If $(\lambda;\bm{x})$ is a solution of NEPv \eqref{eq:nepv-1} and $\lambda\neq0$, then it satisfies
    %$\{\bm{x}_{\texttt{index}_{n}}:n=1,2,\cdots,d\}$ are equal, i.e.,
    \begin{equation}\label{eq:equivalent}
        \|\bm{x}_{\texttt{index}_{1}}\|_{2} = \|\bm{x}_{\texttt{index}_{2}}\|_{2}\cdots= \|\bm{x}_{\texttt{index}_{d}}\|_{2}.
    \end{equation}
%    and $(-\lambda;\hat{\bm{x}})$ is also an eigenpair of NEPv \eqref{eq:nepv-1}.
\end{thm}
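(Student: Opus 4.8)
The plan is to handle the two properties separately, in both cases exploiting the block structure of $\bm{J}(\bm{x})$ together with the multilinearity of the contractions \eqref{eq:submatrix} in the factors $\bm{u}^{(n)}$. For (P1), I would introduce the block-diagonal sign matrix $\bm{S}$ that acts as $-\bm{I}$ on the coordinates $\texttt{index}_1$ and as $+\bm{I}$ on every other block, so that $\hat{\bm{x}} = \bm{S}\bm{x}$ and $\bm{S} = \bm{S}^{T} = \bm{S}^{-1}$. The key is to track how each off-diagonal block transforms under $\bm{x}\mapsto\hat{\bm{x}}$: since $\hat{\bm{u}}^{(1)} = -\bm{u}^{(1)}$ while $\hat{\bm{u}}^{(n)} = \bm{u}^{(n)}$ for $n\neq1$, and since $\bm{A}_{m,n}(\bm{x})$ is obtained by contracting $\tensor{A}$ against every factor except the $m$-th and $n$-th, multilinearity shows that the blocks $\bm{A}_{1,n}(\bm{x})$ (which omit $\bm{u}^{(1)}$) are unchanged, whereas the blocks $\bm{A}_{m,n}(\bm{x})$ with $1\notin\{m,n\}$ (which contain $\bm{u}^{(1)}$) simply change sign. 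A short block computation then gives the conjugation identity $\bm{J}(\hat{\bm{x}}) = -\bm{S}\,\bm{J}(\bm{x})\,\bm{S}$. Applied to an eigenpair $\bm{J}(\bm{x})\bm{y}=\mu\bm{y}$ this yields $\bm{J}(\hat{\bm{x}})(\bm{S}\bm{y}) = -\mu(\bm{S}\bm{y})$, and $\bm{S}\bm{y}$ is precisely the sign flip $\hat{\bm{y}}$ of \eqref{eq:symmetry}, which proves the first assertion.

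For the ``further'' statement, the same identity shows that the spectrum of $\bm{J}(\hat{\bm{x}})$ is the negation of that of $\bm{J}(\bm{x})$. Because all diagonal blocks of $\bm{J}(\bm{x})$ vanish, $\operatorname{tr}\bm{J}(\bm{x})=0$, so whenever $\bm{J}(\bm{x})\neq\bm{0}$ its largest eigenvalue is positive and its smallest is negative; combining this with the spectral negation gives $|\mu| = \max\{\lambda_{\max}(\bm{J}(\bm{x})),\,\lambda_{\max}(\bm{J}(\hat{\bm{x}}))\}$, and the sign of the extremal $\mu$ decides whether it is realized as $\lambda_{\max}(\bm{J}(\bm{x}))$ or as $-\lambda_{\max}(\bm{J}(\hat{\bm{x}}))$, which is the stated equivalence.

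For (P2), I would expand \eqref{eq:nepv-1} block by block. Writing $r_n = \|\bm{x}_{\texttt{index}_n}\|_2$ so that $\bm{x}_{\texttt{index}_n} = r_n\bm{u}^{(n)}$, putting $T = \sum_{m=1}^{d}r_m$, and letting $\bm{g}^{(n)} = \tensor{A}\times_1\bm{u}^{(1)}\cdots\times_{n-1}\bm{u}^{(n-1)}\times_{n+1}\bm{u}^{(n+1)}\cdots\times_d\bm{u}^{(d)}$ denote the contraction omitting only the $n$-th factor, the crucial observation is that applying the $(n,m)$ block of $(d-1)\bm{J}(\bm{x})$ to $\bm{u}^{(m)}$ returns $\bm{g}^{(n)}$ for every $m\neq n$, since this extra contraction restores the full omit-$n$ contraction independently of $m$. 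Hence the $n$-th block of $\bm{J}(\bm{x})\bm{x}$ collapses to $\tfrac{1}{d-1}(T-r_n)\bm{g}^{(n)}$, and \eqref{eq:nepv-1} becomes the system $\tfrac{1}{d-1}(T-r_n)\bm{g}^{(n)} = \lambda r_n\bm{u}^{(n)}$ for $n=1,\dots,d$.

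From here I would first exclude vanishing blocks: if some $r_n=0$ then $\bm{u}^{(n)}=\bm{0}$ forces $\bm{g}^{(p)}=\bm{0}$ for all $p\neq n$ (a zero factor annihilates the contraction), while the $n$-th equation reads $T\bm{g}^{(n)}=\bm{0}$ with $T>0$, so $\bm{g}^{(n)}=\bm{0}$ as well; then $\bm{J}(\bm{x})\bm{x}=\bm{0}$ and $\lambda=0$, contradicting $\lambda\neq0$. Thus every $r_n>0$. Taking the inner product of the $n$-th equation with the unit vector $\bm{u}^{(n)}$ and using that $\langle\bm{g}^{(n)},\bm{u}^{(n)}\rangle = \tensor{A}\times_1\bm{u}^{(1)}\cdots\times_d\bm{u}^{(d)}=:\alpha$ is the same full contraction for every $n$, I obtain the scalar relations $\alpha(T-r_n)=(d-1)\lambda r_n$. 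Summing over $n$ gives $\alpha=\lambda$ (using $T>0$), and substituting back yields $T-r_n=(d-1)r_n$, i.e.\ $r_n=T/d$ for all $n$, which is exactly \eqref{eq:equivalent}. I expect the main obstacle to be pinning down the block identity and the zero-block exclusion rigorously, since everything afterward reduces to a couple of lines of scalar algebra.
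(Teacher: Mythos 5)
Your proposal is correct and takes essentially the same route as the paper: for (P1) the paper establishes exactly your sign bookkeeping by directly computing $\bm{J}(\hat{\bm{x}})\hat{\bm{y}}=-\mu\hat{\bm{y}}$ blockwise (your conjugation identity $\bm{J}(\hat{\bm{x}})=-\bm{S}\bm{J}(\bm{x})\bm{S}$ is just a tidier packaging of the same computation), and for (P2) the paper derives the identical scalar system $\beta_n\mu=\alpha_n\lambda$ with $\beta_n=\frac{1}{d-1}\sum_{k\neq n}\alpha_k$ by the same inner-product argument, handling the zero-block case first just as you do. The only differences are cosmetic: in the final algebra the paper concludes $\alpha_m=\alpha_n$ by pairwise elimination from the constant ratios $\frac{\sum_{k\neq n}\alpha_k}{\alpha_n}$, whereas you sum the relations to identify $\lambda$ with the full contraction and solve $r_n=T/d$ directly; both close the argument in a few lines.
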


\begin{proof}
Let
\begin{equation*}
    \bm{v}^{(n)} =\left\{\begin{array}{cc}
    \frac{\bm{y}_{\texttt{index}_{n}}}{\|\bm{y}_{\texttt{index}_{n}}\|_{2}}, & \|\bm{y}_{\texttt{index}_{n}}\|_{2}\neq0, \\
    \bm{0}, & \|\bm{y}_{\texttt{index}_{n}}\|_{2}=0,
\end{array}\right.
\end{equation*}
for all $n=1,2,\cdots,d$, then by Eq. \eqref{eq:symmetry} and the definition of $\bm{J}(\bm{x})$, we have
    \begin{equation*}\label{eq:thm}
    \begin{split}
    \bm{J}(\hat{\bm{x}})\hat{\bm{y}}&=\frac{1}{(d-1)\sqrt{d}}\left[\begin{array}{cccc}
        \bm{0} & \bm{A}_{1,2}(\bm{x}) & \cdots & \bm{A}_{1,d}(\bm{x})\\
        \bm{A}_{1,2}^{T}(\bm{x}) & \bm{0}& \cdots & -\bm{A}_{2,d}(\bm{x}) \\
        \vdots & \vdots & \ddots & \vdots \\
        \bm{A}_{1,d}^{T}(\bm{x}) & -\bm{A}_{2,d}^{T}(\bm{x})& \cdots & \bm{0}
    \end{array}\right]\left[\begin{array}{c}
         -\bm{v}^{(1)}  \\
         \bm{v}^{(2)} \\
         \vdots \\
         \bm{v}^{(d)}
    \end{array}\right]\\
    &= \frac{1}{\sqrt{d}}\left[\begin{array}{c}
         \mu\bm{v}^{(1)}  \\
         -\mu\bm{v}^{(2)} \\
         \vdots \\
         -\mu\bm{v}^{(d)}
    \end{array}\right] = -\mu\hat{\bm{y}},
    \end{split}
    \end{equation*}
 %Since $(\lambda;\bm{x})$ is an eigenpair of NEPv \eqref{eq:nepv}, i.e., $\bm{J}(\bm{x})\bm{x}=\lambda\bm{x}$, we can obtain $\bm{J}(\hat{\bm{x}})\hat{\bm{x}}=-\lambda\hat{\bm{x}}$ from Eq. \eqref{eq:thm},
 which means that $(-\mu;\hat{\bm{y}})$ is also an eigenpair of NEPv \eqref{eq:nepv-1}.
 In other words, the eigenvalues of $\bm{J}(\bm{x})$ and $\bm{J}(\hat{\bm{x}})$ are opposite each other, that is,
 \[
 \lambda(\bm{J}(\hat{\bm{x}})) = -\lambda(\bm{J}(\bm{x})),
 \]
where $\lambda(\bm{J}(\bm{x}))$ represents the spectrum of $\bm{J}(\bm{x})$. It is easy to know that if $\mu$ is the largest magnitude eigenvalue of $\bm{J}(\bm{x})$, then it is either the largest eigenvalue of $\bm{J}(\bm{x})$ ($\mu>0$) or the largest eigenvalue of ${\bm{J}}(\hat{\bm{x}})$ ($\mu<0$).

   Let $\alpha_n=\|\bm{x}_{\texttt{index}_n}\|_{2}$ and $\beta_n=\frac{\sum\limits_{k\neq n}\alpha_k}{d-1}$ for all $n=1,2,\cdots,d$, then by the definition of  $\bm{J}(\bm{x})$,
   we can rewrite $\bm{J}(\bm{x})\bm{x}=\lambda\bm{x}$ as
\begin{equation}\label{eq:thm1-0}
        \left\{\begin{array}{c}
\beta_1\tensor{A}\times_{2}\bm{u}^{(2)}\times_{3}\bm{u}^{(3)}\cdots\times_{d}\bm{u}^{(d)} =\alpha_{1}\lambda\bm{u}^{(1)},\\
\beta_2\tensor{A}\times_{1}\bm{u}^{(1)}\times_{3}\bm{u}^{(3)}\cdots\times_{d}\bm{u}^{(d)} =\alpha_{2}\lambda\bm{u}^{(2)},\\
\vdots\\
\beta_{d}\tensor{A}\times_{1}\bm{u}^{(1)}\times_{2}\bm{u}^{(2)}\cdots\times_{d-1}\bm{u}^{(d-1)} =\alpha_{d}\lambda\bm{u}^{(d)},\\
 \end{array}
 \right.
    \end{equation}
    where
    \begin{equation*}
    \bm{u}^{(n)} =\left\{\begin{array}{cc}
    \frac{\bm{x}_{\texttt{index}_{n}}}{\|\bm{x}_{\texttt{index}_{n}}\|_{2}}, & \|\bm{x}_{\texttt{index}_{n}}\|_{2}\neq0, \\
    \bm{0}, & \|\bm{x}_{\texttt{index}_{n}}\|_{2}=0,
\end{array}\right. n=1,2,\cdots,d.
\end{equation*}
    %where $\beta_n=\sum\limits_{k\neq n}\frac{\alpha_k}{2d-1}$ for all $n=1,2,\cdots,2d$.
    By multiplying $\bm{u}^{(1)},\bm{u}^{(2)},\cdots,\bm{u}^{(d)}$ on each side of Eq. \eqref{eq:thm1-0}, we obtain
    \begin{equation}\label{eq:thm1-1}
    \beta_n\mu = \alpha_n\lambda,\ n=1,2\cdots,d,
    \end{equation}
where $\mu=\tensor{A}\times_1\bm{u}^{(1)}\times_{2}\bm{u}^{(2)}\cdots\times_{d}\bm{u}^{(d)}$.
Without loss of generality, we assume $\alpha_1=0$. Then by $\alpha_n\geq0$ and $\sum\limits_{n=1}^{d}\alpha_{n}^{2}=1$, we know that $\beta_1>0$ holds, which implies $\mu=0$. Since there is at least one $n\in\{2,\cdots,d\}$ such that $\alpha_n>0$, we can obtain $\lambda=0$, which leads to a contradiction.
Similarly, it is easy to prove that $\alpha_{n}>0$ holds for all $n=1,2,\cdots,d$.
Then by Eq. \eqref{eq:thm1-1}, we have
\begin{equation*}\label{eq:thm1-2}
\frac{\sum\limits_{k\neq 1}\alpha_{k}}{\alpha_{1}} = \frac{\sum\limits_{k\neq 2}\alpha_{k}}{\alpha_{2}}\cdots=\frac{\sum\limits_{k\neq d}\alpha_{k}}{\alpha_{d}},
\end{equation*}
which implies that
\[
\alpha_n\sum\limits_{k\neq m}\alpha_k=\alpha_m\sum\limits_{k\neq n}\alpha_k
\]
holds for all $m,n=1,2,\cdots,d$ and $m<n$, that is,
\[
\left(\alpha_{n}-\alpha_m\right)(\alpha_m+\alpha_n)=-(\alpha_n-\alpha_m)\sum\limits_{k\neq m,n}\alpha_k.
\]
If $\alpha_{m}\neq\alpha_n$, then we have
$\sum\limits_{n=1}^{d}\alpha_n=0,$
which contradicts $\alpha_n>0$.
Therefore, $\alpha_m=\alpha_n$ holds for all $m, n=1,2,\cdots,d$ and $m<n$.
\end{proof}

With the help of the properties of NEPv \eqref{eq:nepv-1} illustrated in Theorem \ref{thm:nepv-property}, we can now give an equivalence theorem, which illustrates that finding the best rank-one approximation of $\tensor{A}$ is equivalent to solving the largest eigenpair of NEPv \eqref{eq:nepv-1}, see Theorem \ref{thm:equivalent}.
%which reveals the hidden connection between the rank-one approximation of $\tensor{A}$ and NEPv.

\begin{thm}\label{thm:equivalent}
If $\lambda_{*}\cdot\bm{u}^{(1)}_*\circ\bm{u}^{(2)}_*\cdots\circ\bm{u}^{(d)}_*$ is the best rank-one approximation of $\tensor{A}$, then $(\lambda_*;\bm{x}_*)$ (or $(-\lambda_*;\hat{\bm{x}}_*)$) is also the largest eigenpair of NEPv \eqref{eq:nepv-1}, where
    \[
    \bm{x}_{*} = \frac{1}{\sqrt{d}}\left[\begin{array}{c}
     \bm{u}^{(1)}_* \\
      \bm{u}^{(2)}_*\\
      \vdots\\
      \bm{u}^{(d)}_*
\end{array}\right]\ \text{and}\ \hat{\bm{x}}_*=\frac{1}{\sqrt{d}}\left[\begin{array}{c}
     -\bm{u}^{(1)}_* \\
      \bm{u}^{(2)}_*\\
      \vdots\\
      \bm{u}^{(d)}_*
\end{array}\right] \in\mathbb{S}^{\sum\limits_{n=1}^{d}I_n-1}.
    \]
    Conversely, if
$(\lambda_{+};\bm{x}_{+})$ is the largest eigenpair of NEPv \eqref{eq:nepv-1}, then $\lambda_+\cdot\bm{u}_+^{(1)}\circ\bm{u}^{(2)}_+\cdots\circ\bm{u}_+^{(d)}$ is also the best rank-one approximation of $\tensor{A}$, where
\[
\bm{u}^{(n)}_+ =\left\{\begin{array}{cc}
    \frac{[\bm{x}_+]_{\texttt{index}_{n}}}{\|[\bm{x}_+]_{\texttt{index}_{n}}\|_{2}}, & \|[\bm{x}_+]_{\texttt{index}_{n}}\|_{2}\neq0 \\
    \bm{0}, & \|[\bm{x}_+]_{\texttt{index}_{n}}\|_{2}=0
\end{array}\right. \text{with}\  \texttt{index}_{n}=\sum\limits_{k=1}^{n-1}I_{k}+1:\sum\limits_{k=1}^{n}I_{k}
\]
for all $n=1,2,\cdots,d$.
\end{thm}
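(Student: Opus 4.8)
The plan is to collapse the block-structured NEPv \eqref{eq:nepv-1} onto the KKT system \eqref{eq:kkt} and then to read off the NEPv eigenvalue as the value of the multilinear objective in \eqref{eq:rank-one approx-2}, so that both ``largest eigenpair of \eqref{eq:nepv-1}'' and ``best rank-one approximation'' reduce to the single statement ``attains the spectral norm of $\tensor{A}$.'' Throughout I read the largest eigenpair of \eqref{eq:nepv-1} as the self-consistent solution $(\lambda;\bm{x})$ with the largest $\lambda$; this is well posed because I will show every NEPv eigenvalue is bounded above by the spectral norm.

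First I would set up a correspondence between the nonzero-eigenvalue solutions of \eqref{eq:nepv-1} and the singular pairs of $\tensor{A}$. Given a singular pair $(\lambda;\bm{u}^{(1)},\dots,\bm{u}^{(d)})$ of \eqref{eq:kkt}, put $\bm{x}=\frac{1}{\sqrt{d}}[\bm{u}^{(1)};\cdots;\bm{u}^{(d)}]$, so each block has norm $1/\sqrt{d}$ and the normalized blocks in \eqref{eq:factor} are exactly the $\bm{u}^{(n)}$. Writing $\bm{g}^{(m)}=\bm{A}_{m,n}(\bm{x})\bm{u}^{(n)}$, which is independent of $n\neq m$ and coincides with the left-hand side of the $m$-th line of \eqref{eq:kkt}, the $m$-th block of $\bm{J}(\bm{x})\bm{x}$ equals $\frac{1}{(d-1)\sqrt{d}}\sum_{n\neq m}\bm{g}^{(m)}=\frac{1}{\sqrt{d}}\bm{g}^{(m)}=\lambda\,\bm{x}_{\texttt{index}_m}$, so $(\lambda;\bm{x})$ solves \eqref{eq:nepv-1}. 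Conversely, for any solution of \eqref{eq:nepv-1} with $\lambda\neq0$, property (P2) forces all block norms equal, hence $\alpha_n=\beta_n=1/\sqrt{d}$ in \eqref{eq:thm1-0}; that system then collapses to \eqref{eq:kkt}, and \eqref{eq:thm1-1} yields $\lambda=\mu=\tensor{A}\times_1\bm{u}^{(1)}\cdots\times_d\bm{u}^{(d)}$. This reduction, powered by (P2), is the technical core.

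Next I would prove the extremal matching. For every solution of \eqref{eq:nepv-1} the identity $\lambda=\tensor{A}\times_1\bm{u}^{(1)}\cdots\times_d\bm{u}^{(d)}$ (when $\lambda\neq0$, and trivially when $\lambda=0$) shows $\lambda$ never exceeds $\max_{\|\bm{u}^{(n)}\|_2=1}\tensor{A}\times_1\bm{u}^{(1)}\cdots\times_d\bm{u}^{(d)}$, and the latter equals the spectral norm of $\tensor{A}$ since flipping the sign of a single factor flips the sign of the contraction. For the forward direction, the best rank-one approximation provides unit factors satisfying \eqref{eq:kkt} with $|\lambda_*|$ equal to the spectral norm; by the correspondence $(\lambda_*;\bm{x}_*)$ solves \eqref{eq:nepv-1}, and if $\lambda_*<0$ I invoke (P1) with $\bm{y}=\bm{x}_*$ to pass to $(-\lambda_*;\hat{\bm{x}}_*)$, whose eigenvalue is positive. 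In either case the resulting positive eigenvalue equals the spectral norm and therefore attains the upper bound, so it is the largest eigenpair.

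For the converse, the largest eigenpair $(\lambda_+;\bm{x}_+)$ must have $\lambda_+$ equal to the spectral norm, because the upper bound above is attained by the solution produced in the forward direction. The correspondence then makes its normalized blocks $\bm{u}^{(n)}_+$ a singular pair with $\tensor{A}\times_1\bm{u}^{(1)}_+\cdots\times_d\bm{u}^{(d)}_+=\lambda_+$ equal to the spectral norm, so these factors are a global maximizer of \eqref{eq:rank-one approx-2} and $\lambda_+\cdot\bm{u}^{(1)}_+\circ\cdots\circ\bm{u}^{(d)}_+$ is the best rank-one approximation. The main difficulty is not a single sharp estimate but careful bookkeeping: obtaining the equal-norm reduction through (P2), tracking the sign ambiguity via (P1) --- which is precisely why both $(\lambda_*;\bm{x}_*)$ and $(-\lambda_*;\hat{\bm{x}}_*)$ appear in the statement --- and justifying that the signed maximum of the multilinear form equals the spectral norm. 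I would also record the harmless standing assumption $\tensor{A}\neq\bm{0}$, so that the spectral norm is positive and the $\lambda=0$ solutions of \eqref{eq:nepv-1} are irrelevant to the maximum.
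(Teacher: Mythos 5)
Your proposal is correct and takes essentially the same route as the paper: both rewrite the KKT system \eqref{eq:kkt} as the NEPv \eqref{eq:nepv-1} via the stacked block vector $\bm{x}_*$, invoke property (P2) of Theorem \ref{thm:nepv-property} to force equal block norms and thereby identify every nonzero NEPv eigenvalue with a value of the multilinear form (hence bounded by the spectral norm), and use (P1) to handle the sign of $\lambda_*$. The only differences are presentational: the paper phrases the extremality step as a contradiction where you use a direct attained-upper-bound argument, and you explicitly spell out the converse direction and the KKT-to-NEPv block computation, both of which the paper leaves implicit.
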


\begin{proof}
Without loss of generality, we assume $\lambda_*>0$. Since $\lambda_{*}\cdot\bm{u}^{(1)}_*\circ\bm{u}^{(2)}_*\cdots\circ\bm{u}^{(d)}_*$ is the best rank-one approximation of $\tensor{A}$, it satisfies the KKT equation \eqref{eq:kkt}, which can be rewritten as
\[
\bm{J}(\bm{x}_{*})\bm{x}_{*}=\lambda_{*}\bm{x}_{*}.
\]
Clearly, $(\lambda_*;\bm{x}_*)$ is also a solution of NEPv \eqref{eq:nepv-1}.
If there exists a solution $(\lambda_+;\bm{x}_+)$ of NEPv \eqref{eq:nepv-1} such that $\lambda_+>\lambda_*$, then by Theorem \ref{thm:nepv-property}, we have
\begin{equation*}\label{eq:equality}
    \|[\bm{x}_+]_{\texttt{index}_{1}}\|_{2} = \|[\bm{x}_+]_{\texttt{index}_{2}}\|_{2}\cdots= \|[\bm{x}_+]_{\texttt{index}_{d}}\|_{2},
\end{equation*}
and
\[
\lambda_+ = \tensor{A}\times_1\bm{u}_+^{(1)}\times_2\bm{u}_+^{(2)}\cdots\times_{d}\bm{u}^{(d)}_+,
\]
which contradicts $\lambda_+>\lambda_*$.
For the case of $\lambda_*<0$, we can similarly prove that $\lambda_+\leq-\lambda_*$ holds.
Therefore, if $\lambda_*>0$, then $(\lambda_*;\bm{x}_*)$ is the largest eigenpair of NEPv \eqref{eq:nepv-1}, otherwise $(-\lambda;\hat{\bm{x}}_*)$ is the largest eigenpair.
\end{proof}

In fact, for second-order tensors, i.e., matrices, the corresponding mapping $\mathcal{J}$ will degenerate to
\[
\mathcal{J}(\bm{x})=\bm{J}\equiv\left[\begin{array}{cc}
    \bm{0} & \bm{A} \\
    \bm{A}^{T} & \bm{0}
\end{array}\right],\ \forall \bm{x}\in\mathbb{S}^{I_1+I_2-1}.
\]
According to the Eckart-Young-Mirsky theorem \cite{eckart1936approximation,mirsky1960symmetric}, we know that the best rank-one approximation of $\bm{A}$ is equivalent to the largest singular pair, which is also the largest eigenpair of $\bm{J}$.
Therefore,
Theorem \ref{thm:equivalent} can be regarded as a generalization of the result that demonstrates the equivalence between finding the best rank-one approximation of $\bm{A}\in\mathbb{R}^{I_1\times I_2}$ and solving the largest eigenpair of the symmetric matrix $\bm{J}$.
Moreover,
we know that the largest eigenpair of $\bm{J}$ is also the largest magnitude eigenpair, Theorem \ref{thm:coro} shows that it can be also generalized to higher-order tensors.

\begin{thm}\label{thm:coro}
If $(\lambda_*;\bm{x}_*)$ is the largest eigenpair of NEPv \eqref{eq:nepv-1}, then it is also the largest magnitude eigenpair of $\bm{J}(\bm{x}_*)$.
\end{thm}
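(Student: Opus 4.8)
The plan is to freeze the symmetric matrix $\bm{M} := \bm{J}(\bm{x}_*)$ and compare $\lambda_*$ against its entire spectrum. Since $(\lambda_*;\bm{x}_*)$ solves the NEPv \eqref{eq:nepv-1}, we have $\bm{M}\bm{x}_* = \lambda_*\bm{x}_*$, so $\lambda_*$ is already an eigenvalue of $\bm{M}$ with unit eigenvector $\bm{x}_*$; consequently the largest-magnitude eigenvalue $\nu$ of $\bm{M}$ satisfies $\nu \ge |\lambda_*|$. By Theorem \ref{thm:equivalent} the value $\lambda_*$ equals the optimum of \eqref{eq:rank-one approx-2}, and since NEPv solutions occur in sign-pairs $(\lambda;\bm{x})\mapsto(-\lambda;\hat{\bm{x}})$ by property (P1), the largest one is nonnegative and coincides with the spectral norm, $\lambda_* = \|\tensor{A}\|_2 \ge 0$. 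It therefore suffices to establish the reverse inequality $\nu \le \lambda_*$. The point to keep in mind throughout is that $\bm{M}$ is frozen at $\bm{x}_*$, so a generic eigenvector of $\bm{M}$ is \emph{not} a NEPv solution, and Theorem \ref{thm:equivalent} cannot be invoked for it; the bound must instead come from estimating the Rayleigh quotient of $\bm{M}$ directly.

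For this I would use that for a real symmetric matrix the largest-magnitude eigenvalue is $\nu = \max_{\|\bm{y}\|_2=1}|\bm{y}^{T}\bm{M}\bm{y}|$. Writing $\bm{y}$ in block form $\bm{y}=[\bm{y}_1;\cdots;\bm{y}_d]$ with $\bm{y}_n=\bm{y}_{\texttt{index}_n}$ and using the definitions \eqref{eq:map}--\eqref{eq:submatrix}, the quadratic form expands as
\[
\bm{y}^{T}\bm{M}\bm{y}=\frac{2}{d-1}\sum_{m<n}\bm{y}_m^{T}\bm{A}_{m,n}(\bm{x}_*)\bm{y}_n .
\]
Each summand equals $\|\bm{y}_m\|_2\|\bm{y}_n\|_2$ times a full contraction of $\tensor{A}$ against the unit vectors $\bm{u}^{(k)}_*$ for $k\neq m,n$ and against $\bm{y}_m/\|\bm{y}_m\|_2$, $\bm{y}_n/\|\bm{y}_n\|_2$ in modes $m,n$ (summands with $\bm{y}_m=\bm{0}$ vanish). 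By the definition of the spectral norm, any such contraction of $\tensor{A}$ by unit vectors is bounded in modulus by $\|\tensor{A}\|_2=\lambda_*$, which yields $|\bm{y}_m^{T}\bm{A}_{m,n}(\bm{x}_*)\bm{y}_n|\le \lambda_*\|\bm{y}_m\|_2\|\bm{y}_n\|_2$.

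Setting $a_n:=\|\bm{y}_n\|_2$, so that $\sum_n a_n^2=\|\bm{y}\|_2^2=1$, I would close the argument with the elementary estimate
\[
2\sum_{m<n}a_m a_n=\Big(\sum_{n=1}^d a_n\Big)^2-\sum_{n=1}^d a_n^2\le d\sum_{n=1}^d a_n^2-1=d-1,
\]
the inequality being Cauchy--Schwarz. Combining the two displays gives $|\bm{y}^{T}\bm{M}\bm{y}|\le \frac{\lambda_*}{d-1}(d-1)=\lambda_*$ for every unit $\bm{y}$, hence $\nu\le\lambda_*$; together with $\nu\ge|\lambda_*|=\lambda_*$ this forces $\nu=\lambda_*$, so $(\lambda_*;\bm{x}_*)$ is the largest-magnitude eigenpair of $\bm{J}(\bm{x}_*)$. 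The main obstacle is the middle step: one has to recognize that freezing $\bm{M}$ at $\bm{x}_*$ forbids any appeal to the best-rank-one equivalence for the competitor $\bm{y}$, and that the correct tool is instead the per-block spectral-norm bound coupled with the Cauchy--Schwarz weight estimate. Once the quadratic form is written blockwise, the remaining computation is routine.
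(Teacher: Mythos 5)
Your proof is correct, and it takes a genuinely different route from the paper's, although both rest on the same key fact: once Theorem \ref{thm:equivalent} identifies $\lambda_*$ with the spectral norm of $\tensor{A}$, every full contraction of $\tensor{A}$ against unit vectors (in particular through any block $\bm{A}_{m,n}(\bm{x}_*)$) has modulus at most $\lambda_*$. The paper aggregates this fact through the eigen-equation and a contradiction: assuming $\bm{J}(\bm{x}_*)$ has an eigenpair $(\mu;\bm{y})$ with $|\mu|>\lambda_*$, it takes the first block row of $\bm{J}(\bm{x}_*)\bm{y}=\mu\bm{y}$, multiplies by the normalized first block of $\bm{y}$, and uses the triangle inequality to exhibit a contraction $\lambda_{1,k}$ with $|\lambda_{1,k}|\geq|\mu|>\lambda_*$. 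You aggregate it variationally: $\nu=\max_{\|\bm{y}\|_2=1}|\bm{y}^{T}\bm{J}(\bm{x}_*)\bm{y}|$, the blockwise expansion of the quadratic form, the per-block bound $|\bm{y}_m^{T}\bm{A}_{m,n}(\bm{x}_*)\bm{y}_n|\leq\lambda_*\,a_m a_n$, and the Cauchy--Schwarz estimate $2\sum_{m<n}a_m a_n\leq d-1$. Your route buys something concrete: the paper's block-row equation \eqref{eq:coro-1} is written directly in terms of the normalized blocks $\bm{v}^{(k)}$, which tacitly treats all blocks of $\bm{y}$ as if they had equal norms; for an arbitrary eigenvector of the frozen matrix this is not justified (property (P2) applies only to genuine NEPv solutions, a point you flagged explicitly), and a careful repair must carry the weights $\|\bm{y}_{\texttt{index}_k}\|_2$ and work with the block of maximal norm rather than block one. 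Your Rayleigh-quotient argument handles unequal block norms automatically and yields the slightly stronger conclusion $\|\bm{J}(\bm{x}_*)\|_2\leq\lambda_*$ in one pass; its only extra obligations --- that $\lambda_*\geq0$ and that $\lambda_*$ is itself an eigenvalue of the frozen matrix --- are ones you discharge correctly via the sign-pair property (P1) and the NEPv equation.
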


\begin{proof}
Since $(\lambda_*;\bm{x}_*)$ is the largest eigenpair of NEPv \eqref{eq:nepv-1}, Theorem \ref{thm:equivalent} show that $\lambda_*\cdot\bm{u}_*^{(1)}\circ\bm{u}^{(2)}_*\cdots\circ\bm{u}^{(d)}_*$ is the best rank-one approximation of $\tensor{A}$, where
\[
\bm{u}_*^{(n)} =\left\{\begin{array}{cc}
    \frac{[\bm{x}_*]_{\texttt{index}_{n}}}{\|[\bm{x}_*]_{\texttt{index}_{n}}\|_{2}}, & \|[\bm{x}_*]_{\texttt{index}_{n}}\|_{2}\neq0 \\
    \bm{0}, & \|[\bm{x}_*]_{\texttt{index}_{n}}\|_{2}=0
\end{array}\right. \text{with}\  \texttt{index}_{n}=\sum\limits_{k=1}^{n-1}I_{k}+1:\sum\limits_{k=1}^{n}I_{k}
\]
for all $n=1,2,\cdots,d$. If $\bm{J}(\bm{x}_*)$ has an eigenpair $(\mu;\bm{y})$ such that $\lambda_*<|\mu|$, then by the definition of $\bm{J}(\bm{x})$, we have
\begin{equation}\label{eq:coro-1}
    \sum\limits_{k=2}^{d}\bm{A}_{1,k}(\bm{x}_*)\bm{v}^{(k)} = \mu\bm{v}^{(1)},
\end{equation}
where
\[
\bm{v}^{(n)} =\left\{\begin{array}{cc}
    \frac{\bm{y}_{\texttt{index}_{n}}}{\|\bm{y}_{\texttt{index}_{n}}\|_{2}}, & \|\bm{y}_{\texttt{index}_{n}}\|_{2}\neq0 \\
    \bm{0}, & \|\bm{y}_{\texttt{index}_{n}}\|_{2}=0
\end{array}\right.
\]
for all $n=1,2,\cdots,d$.
By multiplying $\bm{v}^{(1)}$ on each side of Eq. \eqref{eq:coro-1}, we obtain
\[
\frac{1}{d-1}\sum\limits_{k=2}^{d}\lambda_{1,k} = \mu,
\]
where $\lambda_{1,k} = \tensor{A}\times_1\bm{v}^{(1)}\cdots\times_{k-1}\bm{u}_*^{(k-1)}\times_{k}\bm{v}^{(k)}\times_{k+1}\bm{u}^{(k+1)}_*\cdots\times_{d}\bm{u}_{*}^{(d)}$ for all $k=2,\cdots,d$. Further, according to the absolute value triangle inequality, we have
\[
|\mu|\leq\frac{1}{d-1}\sum\limits_{k=2}^{d}|\lambda_{1,k}|,
\]
which shows that there exists at least one $k\in\{2,\cdots,d\}$ such that $|\lambda_{1,k}|\geq|\mu|$. Clearly. it contradicts that $\lambda_*\cdot\bm{u}^{(1)}_*\circ\bm{u}^{(2)}_*\cdots\circ\bm{u}^{(d)}_*$ is the best rank-one approximation of $\tensor{A}$.
\end{proof}

\section{Higher-order self-consistent field (HOSCF)}\label{sec:section4}

From Theorem \ref{thm:equivalent}, we know that finding the best rank-one approximation of $\tensor{A}$ can be transformed into solving the largest eigenpair of NEPv \eqref{eq:nepv-1}, which enables us to develop algorithms from the perspective of NEPv. In this section, we present an efficient decoupling algorithm for solving the rank-one approximation problem inspired by the classic SCF iteration, namely the higher-order SCF (HOSCF) algorithm.

\subsection{The algorithm}
The SCF iteration is the most widely used method for NEPv \eqref{eq:nepv} \cite{roothaan1951new,martin_2004,saad2010numerical}, it updates the eigenvector matrix $\bm{X}_k$ associated with the $r$ largest/smallest eigenvalues of $\bm{H}(\bm{X}_{k-1})$, where $\bm{X}_{k-1}$ is the eigenvector matrix at the previous iteration step.
However, if the plain SCF iteration is directly used to solve the largest eigenpair of the defined NEPv \eqref{eq:nepv-1}, we are uncertain whether to employ $\bm{x}_{k-1}$ or $\hat{\bm{x}}_{k-1}$ to construct the intermediate symmetric matrix $\bm{J}_k$ and whether the obtained eigenvector is $\bm{x}_{k}$ or $-\bm{x}_{k}$ at the $k$th iteration, which may lead to the failure of convergence to the true solution.
To address this issue, at the $k$th iteration of HOSCF, we compute the largest magnitude eigenpair of $\bm{J}_{k-1}$ instead of the largest eigenpair, which is proven to be feasible in Theorem \ref{thm:nepv-property}. The detailed computational procedure of HOSCF is described in Algorithm \ref{algo:XXX}.

%\begin{breakablealgorithm}%[H]
\begin{algorithm}%[H]
%	\algsetup{linenosize=\normalsize}
	\setstretch{1.0}
	\normalsize
	\caption{HOSCF: Higher-Order Self-Consistent Field.}
	\label{algo:XXX}
	\begin{algorithmic}[1]
		\Require Tensor $\bm{\mathcal{A}}\in\mathbb{R}^{I_{1}\times I_{2}\cdots\times I_{d}}$, initial guess $\bm{u}_{0}^{(1)},\bm{u}_{0}^{(2)},\cdots, \bm{u}_{0}^{(d)}$
		\Ensure The best rank-one approximation of $\tensor{A}$: $\lambda_k\cdot\bm{u}_{k}^{(1)}\circ\bm{u}_{k}^{(2)}\cdots\circ \bm{u}_{k}^{(d)}$
   	\State $\lambda_{0}\ \leftarrow\  \bm{\mathcal{A}}\times_{1}\bm{u}_{0}^{(1)}\times_{2}\bm{u}_{0}^{(2)}\cdots\times_{d}\bm{u}_{0}^{(d)}$
		\State $k\ \leftarrow\ 0$
		\While{not convergent}
    \State $\bm{J}_{k-1}\ \leftarrow$ the symmetric matrix $\bm{J}(\bm{x}_{k-1})$
         \State $(\lambda_{k};\bm{x}_{k})\ \leftarrow$ the largest magnitude eigenpair of $\bm{J}_{k-1}$
         \For{$n=1,2,\cdots,d$}
        \State $\bm{u}_{k}^{(n)}\ \leftarrow\ [\bm{x}_{k}]_{\texttt{index}_{n}}$ with $\texttt{index}_{n}=\sum\limits_{m=1}^{n-1}I_{m}+1:\sum\limits_{m=1}^{n}I_{m}$
        \State $\bm{u}_{k}^{(n)}\ \leftarrow\ \bm{u}_{k}^{(n)}/\|\bm{u}_{k}^{(n)}\|_{2}$
        \EndFor
		\State $k\ \leftarrow\ k+1$
		\EndWhile
        \If{$\lambda_{k}<0$}
        \State $\lambda_{k}\ \leftarrow\ -\lambda_{k}$
        \State $\bm{u}_{k}^{(1)}\ \leftarrow\ -\bm{u}_{k}^{(1)}$
        \EndIf
	\end{algorithmic}
%\end{breakablealgorithm}
\end{algorithm}

From Algorithm \ref{algo:XXX}, we know that these factors $\{\bm{u}_{k}^{(n)}:n=1,2,\cdots,d\}$ are updated simultaneously by solving the largest magnitude eigenpair of $\bm{J}_{k-1}$, depending only on factors from the previous iteration. Therefore, HOSCF is a decoupling algorithm, it has higher parallel efficiency than coupled algorithms such as HOPM and ASVD. On the other hand, compared with the GRQI algorithm,
%although the HOSCF algorithm is also designed based on the KKT equation,
the HOSCF algorithm takes into account the maximization requirement during the iterations, and thus usually has a relatively large convergence region.

\subsection{Computational complexity} In each iteration of HOSCF, the main computational procedure consists of two parts. The first part is the construction of the symmetric matrix $\bm{J}_{k-1}\in\text{Sym}\left(\sum\limits_{n=1}^{d}I_n\right)$ (i.e., line 4 of Algorithm \ref{algo:XXX}), whose time cost is $\mathcal{O}\left(d^2\prod\limits_{n=1}^{d}I_{n}\right)$. Clearly, the time cost for the construction of $\bm{J}_{k-1}$ grows exponentially with the order of $\tensor{A}$, it is the computational bottleneck of Algorithm \ref{algo:XXX}, especially when $d$ is large.
Fortunately, the calculations of the different blocks of $\bm{J}_{k-1}$ are independent, so we can improve its computational efficiency by a parallel implementation.
The second part is finding the largest magnitude eigenpair of $\bm{J}_{k-1}$ (i.e., line 5 of Algorithm \ref{algo:XXX}) with a time cost of $\mathcal{O}\left(\sum\limits_{n=1}^{d}I_{n}\right)$, which grows only linearly with the order of the tensor $\tensor{A}$.
In addition, some techniques in randomized numerical linear algebra can also be used to further accelerate the calculation of the largest magnitude eigenpair of the symmetric matrix $\bm{J}_{k-1}$, see \cite{Halko2011,Mahoney2016,martinsson2020randomized}.

\subsection{Stopping criteria} It is worth mentioning that a suitable stopping criteria is critical for Algorithm \ref{algo:XXX}. There are some stopping criteria commonly used in practice, such as
\[
|\lambda_{k+1} - \lambda_{k}|\leq \texttt{tol}\cdot|\lambda_{k}|
\]
and
\[
|\sin\theta(\bm{x}_{k+1},\bm{x}_{k})|\leq \texttt{tol},
\]
where $\texttt{tol}$ is a predetermined error tolerance, and $\theta(\bm{x}_{k+1},\bm{x}_{k})$ represents the angle between $\bm{x}_{k+1}$ and $\bm{x}_{k}$. However, the above two stopping criteria are based on the change between adjacent iteration steps, which generally cannot reflect whether the HOSCF algorithm converges. To this end, we refer to \cite{cai2018eigenvector} to give the following stopping criteria based on residual, i.e.,
\begin{equation}\label{eq:stop-3}
\frac{\|\bm{J}_{k-1}\bm{x}_{k} -\rho_{k}\bm{x}_{k}\|_{2}}{\|\bm{J}_{k-1}\|_{F}+|\lambda_{k}|}\leq \texttt{tol},
\end{equation}
where $\rho_{k}=\bm{x}_{k}^{T}\bm{J}_{k-1}\bm{x}_{k}$ $\left(\|\bm{x}_{k}\|_{2}=1\right)$ is called the Rayleigh quotient. Obviously, the stopping criteria \eqref{eq:stop-3} reflects the satisfaction of the KKT equation \eqref{eq:kkt}, which is more reasonable in practice.

\subsection{Improved HOSCF (iHOSCF)} Furthermore, to improve the convergence speed of HOSCF, we refer to \cite{bai2022variational} to introduce Rayleigh quotient iteration during the iterations of HOSCF. Specifically, after line 5 of Algorithm \ref{algo:XXX} is executed, we run one step of Rayleigh quotient iteration, that is,
\[
\begin{gathered}
\text{calculation of the Rayleigh quotient:}\ \rho_k\leftarrow\frac{\bm{x}_{k}^{T}\bm{J}_{k-1}\bm{x}_{k}}{\bm{x}_{k}^{T}\bm{x}_{k}},\\
\text{Rayleigh quotient iteration:}\ \bm{x}_{k}\leftarrow\left(\bm{J}_{k-1}-\rho_{k}\bm{I}\right)^{-1}\bm{x}_{k},\\
\text{normalization:}\ \bm{x}_{k}\leftarrow\frac{\bm{x}_{k}}{\|\bm{x}_{k}\|_{2}}.
\end{gathered}
\]
In practice, the updated $\bm{x}_{k}$ by the Rayleigh quotient iteration is only accepted in the proposed improved HOSCF (iHOSCF) if the corresponding eigenvalue $\lambda_k$ increases, which is to eliminate counteractions caused by the fact that $\bm{x}_k$ is far from the exact solution $\bm{x}_{*}$ at the beginning of the iteration.
Thanks to the cubic convergence property of the Rayleigh quotient iteration \cite{demmel1997applied}, iHOSCF significantly reduces the number of iterations of HOSCF and achieves a comparable convergence speed to ASVD.

\section{Convergence analysis}\label{sec:section5}
In this section, we will establish the convergence theory of the proposed HOSCF algorithm.
Before that, we need the following two lemmas. The first one is the famous perturbation theory on symmetric matrix, and the second one provides the upper bounds of $\|\bm{J}-\bm{J}_*\|_2$, $\|(\bm{J}-\bm{J}_*)\bm{x}_*\|_2$, and $\bm{x}_*^{T}(\bm{J}-\bm{J}_*)\bm{x}_*$ respectively.

\begin{lem}\label{lem:perturbation-bound}
    (See \cite{sun1987matrix,stewart1990matrix,li2006matrix}.) Let $\bm{A},\ \hat{\bm{A}}=\bm{A}+\bm{E}\in\mathbb{R}^{n\times n}$ be two real symmetric matrices, and
    \[
    \lambda_{1}\geq\lambda_2\cdots\geq\lambda_n\ \text{and}\ \hat{\lambda}_1\geq\hat{\lambda}_2\cdots\geq\hat{\lambda}_n
    \]
    are their eigenvalues. Assume that the gap $\gamma$ between the $r$ and $(r+1)$th eigenvalues is larger than 0, then the following inequalities
    \begin{equation*}\label{eq:perturbation-bound1}
    \begin{split}
        |\hat{\lambda}_r - \lambda_r| &\leq\|\bm{E}\|_2\\
        \end{split}
        \end{equation*}
        and
        \begin{equation*}
            \begin{split}
        \|\sin\angle\bm{\Theta}(\bm{V},\hat{\bm{V}})\|_{2} &\leq \frac{\|(\bm{I}-\hat{\bm{P}})\bm{E}\bm{P}\|_2}{\hat{\lambda}_r - \lambda_{r+1}}\\
    \end{split}
    \end{equation*}
    hold,
    where $\bm{V},\hat{\bm{V}}\in\mathbb{R}^{n\times r}$ are matrices associated with the $r$ largest eigenvalues of $\bm{A}$ and $\hat{\bm{A}}$, and $\bm{P} = \bm{V}\bm{V}^{T}$ and $\hat{\bm{P}}=\hat{\bm{V}}\hat{\bm{V}}^{T}$ represent orthogonal projection matrices.
\end{lem}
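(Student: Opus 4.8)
The plan is to treat the two displayed inequalities of Lemma~\ref{lem:perturbation-bound} separately, since they are really two independent classical facts: the first is Weyl's eigenvalue perturbation bound, and the second is a Davis--Kahan $\sin\Theta$ theorem. Both follow from variational and invariant-subspace characterizations once the symmetry of $\bm{A}$ and $\hat{\bm{A}}$ is exploited, so that no spectral decomposition of the perturbation $\bm{E}$ itself is required.

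For the eigenvalue bound $|\hat{\lambda}_r-\lambda_r|\le\|\bm{E}\|_2$, I would invoke the Courant--Fischer min--max characterization
\[
\lambda_r=\max_{\dim\mathcal{S}=r}\ \min_{\bm{x}\in\mathcal{S},\,\|\bm{x}\|_2=1}\bm{x}^T\bm{A}\bm{x},
\]
together with the analogous formula for $\hat{\lambda}_r$ applied to $\hat{\bm{A}}=\bm{A}+\bm{E}$. For any unit vector $\bm{x}$ one has $|\bm{x}^T\bm{E}\bm{x}|\le\|\bm{E}\|_2$, hence the uniform sandwich $\bm{x}^T\bm{A}\bm{x}-\|\bm{E}\|_2\le\bm{x}^T\hat{\bm{A}}\bm{x}\le\bm{x}^T\bm{A}\bm{x}+\|\bm{E}\|_2$. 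Since this pointwise inequality is preserved under taking the inner minimum over $\mathcal{S}$ and the outer maximum over $r$-dimensional subspaces, it transfers to the Rayleigh-quotient extremizers and yields $\lambda_r-\|\bm{E}\|_2\le\hat{\lambda}_r\le\lambda_r+\|\bm{E}\|_2$, which is exactly the claim. This part is routine.

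The $\sin\Theta$ bound is the substantive one. I would first recall that, for invariant subspaces of equal dimension $r$, the largest canonical angle satisfies $\|\sin\angle\bm{\Theta}(\bm{V},\hat{\bm{V}})\|_2=\|(\bm{I}-\hat{\bm{P}})\bm{P}\|_2=\|\hat{\bm{V}}_\perp^T\bm{V}\|_2$, where $\hat{\bm{V}}_\perp$ completes $\hat{\bm{V}}$ to an orthonormal basis; this reduces the task to bounding $\bm{S}=\hat{\bm{V}}_\perp^T\bm{V}$. Writing the invariant-subspace relations $\bm{A}\bm{V}=\bm{V}\bm{\Lambda}$ and $\hat{\bm{A}}\hat{\bm{V}}_\perp=\hat{\bm{V}}_\perp\hat{\bm{\Lambda}}_\perp$ and substituting $\hat{\bm{A}}=\bm{A}+\bm{E}$, I would expand $\hat{\bm{V}}_\perp^T\hat{\bm{A}}\bm{V}$ in two ways to obtain the Sylvester equation $\hat{\bm{\Lambda}}_\perp\bm{S}-\bm{S}\bm{\Lambda}=\hat{\bm{V}}_\perp^T\bm{E}\bm{V}$, whose right-hand side has spectral norm exactly $\|(\bm{I}-\hat{\bm{P}})\bm{E}\bm{P}\|_2$. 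The separation between the retained spectrum (that of $\bm{\Lambda}$) and the discarded perturbed spectrum (that of $\hat{\bm{\Lambda}}_\perp$) makes the Sylvester operator invertible, and the operator norm of its inverse equals the reciprocal of the eigenvalue gap; solving for $\bm{S}$ and taking norms produces the stated ratio.

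The hard part is this $\sin\Theta$ estimate, and within it two points require care. First, one must justify that the canonical-angle norm equals $\|\bm{S}\|_2$ and that passing through the orthonormal factors $\hat{\bm{V}}_\perp$ and $\bm{V}$ leaves spectral norms unchanged, so that the numerator is genuinely $\|(\bm{I}-\hat{\bm{P}})\bm{E}\bm{P}\|_2$. Second, and more delicate, one must pin down the precise denominator $\hat{\lambda}_r-\lambda_{r+1}$: this is where the disjointness of the two eigenvalue clusters (one drawn from $\hat{\bm{A}}$, one from $\bm{A}$) enters, and where the hypothesis $\gamma>0$ is used to guarantee a strictly positive gap and hence a norm-controlled inverse of the Sylvester operator. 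Since the statement is quoted verbatim from \cite{sun1987matrix,stewart1990matrix,li2006matrix}, I would ultimately defer the sharp identification of this gap constant to those references rather than re-derive it here.
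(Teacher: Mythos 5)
First, a point of comparison: the paper does not prove this lemma at all --- it is quoted from the cited references (Sun; Stewart--Sun; Li), so there is no internal proof to match, and your overall route (Courant--Fischer for the eigenvalue bound, a Sylvester-equation Davis--Kahan argument for the angle bound) is exactly the standard one those references use. Your first part, Weyl's inequality via the min--max characterization, is correct and complete.

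The genuine gap is in the $\sin\Theta$ part, and it sits precisely at the step you chose to defer. The Sylvester equation you set up, $\hat{\bm{\Lambda}}_\perp\bm{S}-\bm{S}\bm{\Lambda}=\hat{\bm{V}}_\perp^{T}\bm{E}\bm{V}$ with $\bm{S}=\hat{\bm{V}}_\perp^{T}\bm{V}$, has its invertibility and norm bound governed by the separation between $\mathrm{spec}(\bm{\Lambda})=\{\lambda_1,\dots,\lambda_r\}$ and $\mathrm{spec}(\hat{\bm{\Lambda}}_\perp)=\{\hat{\lambda}_{r+1},\dots,\hat{\lambda}_n\}$, which is $\lambda_r-\hat{\lambda}_{r+1}$, \emph{not} the stated $\hat{\lambda}_r-\lambda_{r+1}$. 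So ``solving for $\bm{S}$ and taking norms'' does not produce the stated ratio; it produces $\|\sin\angle\bm{\Theta}(\bm{V},\hat{\bm{V}})\|_2\le\|(\bm{I}-\hat{\bm{P}})\bm{E}\bm{P}\|_2/(\lambda_r-\hat{\lambda}_{r+1})$. To obtain the denominator $\hat{\lambda}_r-\lambda_{r+1}$ one must swap the roles of the two matrices, i.e.\ work with $\bm{T}=\bm{V}_\perp^{T}\hat{\bm{V}}$ and the equation $\bm{\Lambda}_\perp\bm{T}-\bm{T}\hat{\bm{\Lambda}}=-\bm{V}_\perp^{T}\bm{E}\hat{\bm{V}}$ --- but then the numerator comes out as $\|(\bm{I}-\bm{P})\bm{E}\hat{\bm{P}}\|_2$ rather than $\|(\bm{I}-\hat{\bm{P}})\bm{E}\bm{P}\|_2$. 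In short, the two legitimate Davis--Kahan variants are $\bigl(\|(\bm{I}-\hat{\bm{P}})\bm{E}\bm{P}\|_2,\ \lambda_r-\hat{\lambda}_{r+1}\bigr)$ and $\bigl(\|(\bm{I}-\bm{P})\bm{E}\hat{\bm{P}}\|_2,\ \hat{\lambda}_r-\lambda_{r+1}\bigr)$; the lemma as printed cross-pairs the numerator of the first with the denominator of the second, and your sketch actually proves the first variant while asserting the cross-paired one. A second, smaller issue: $\gamma>0$ alone does not make either denominator positive; you also need $\|\bm{E}\|_2<\gamma$ (which follows from combining with the Weyl part) before the Sylvester operator is invertible with the claimed bound, so the hypothesis cannot be invoked quite as you describe.

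For fairness, the mismatch is harmless where Lemma \ref{lem:perturbation-bound} is actually used (the first displayed inequality in the proof of Theorem \ref{thm:convergence}): after an application of Weyl's inequality, both candidate denominators are bounded below by $\gamma_*-\|\bm{J}_k-\bm{J}_*\|_2$, so either variant yields the paper's estimate. But as a proof of the statement exactly as written, your argument has a hole at the denominator, and it is exactly the ``delicate'' step you declined to carry out rather than a detail safely delegated to the references.
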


\begin{lem}\label{lem:J-bound}
    Let $(\lambda_*;  \bm{x}_*)$ be the largest eigenpair of NEPv \eqref{eq:nepv-1},
    and
    $\bm{x}\in\mathbb{S}^{\sum\limits_{n=1}^{d}I_n-1}$ be a unit vector that is sufficiently close to $\bm{x}_*$. Suppose that
\[
\bm{x}_*=\frac{1}{\sqrt{d}}\left[\begin{array}{c}
         \bm{u}^{(1)}_* \\
         \bm{u}^{(2)}_* \\
         \vdots \\
         \bm{u}^{(d)}_*
    \end{array}\right]\ \text{and}\ \bm{x}=\frac{1}{\sqrt{d}}\left[\begin{array}{c}
         \bm{u}^{(1)} \\
         \bm{u}^{(2)} \\
         \vdots \\
         \bm{u}^{(d)}
    \end{array}\right],
\]
and $\lambda_*$ is a single eigenvalue of $\bm{J}_*$,
where
    $\bm{u}^{(n)} = \alpha_n\bm{u}_*^{(n)}+\beta_n\bm{v}^{(n)}$ ($\bm{v}^{(n)}\bot\bm{u}_*^{(n)}$)
    and $\alpha_n>0$, $\beta_n$ is a sufficiently small real number for all $n=1,2,\cdots,d$. Then there exists positive constants $C_1(d)$, $C_2(d)$, and $\varepsilon>0$ such that
    \begin{equation}\label{eq:bound1}
        \begin{split}
            \|\bm{J} - \bm{J}_*\|_2 &\leq  C_1(d)\beta+O(\beta^{1+\varepsilon}), \\
            \|(\bm{J}-\bm{J}_*)\bm{x}_*\|_2 &\leq  C_2(d)\beta+O(\beta^{1+\varepsilon}), \\
        \end{split}
    \end{equation}
    and
    \begin{equation}\label{eq:bound2}
        \bm{x}_*^T(\bm{J}-\bm{J}_*)\bm{x}_* \leq O(\beta^{1+\varepsilon})
    \end{equation}
    hold,
    where $\beta = \sqrt{\frac{\sum\limits_{n=1}^d\beta_n^2}{d}}$.
\end{lem}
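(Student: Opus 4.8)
The plan is to exploit the block structure of $\bm{J}$ together with the multilinearity of the tensor contraction defining each block $\bm{A}_{m,n}$. First I would record the consequence of normalization: since $\bm{u}^{(n)}$ and $\bm{u}_*^{(n)}$ are both unit vectors and $\bm{v}^{(n)}\perp\bm{u}_*^{(n)}$ is of unit length, we have $\alpha_n^2+\beta_n^2=1$, hence $\alpha_n=\sqrt{1-\beta_n^2}=1+O(\beta_n^2)$ and in particular $\alpha_n-1=O(\beta^2)$ for every $n$. This means every scalar prefactor differs from $1$ only at second order in $\beta$, which is precisely the order we are permitted to discard in \eqref{eq:bound2}.

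Next I would expand each off-diagonal block. Writing $S=\{1,\dots,d\}\setminus\{m,n\}$ and substituting $\bm{u}^{(k)}=\alpha_k\bm{u}_*^{(k)}+\beta_k\bm{v}^{(k)}$ for each $k\in S$ into \eqref{eq:submatrix}, multilinearity gives
\begin{equation*}
\bm{A}_{m,n}(\bm{x})=\Big(\prod_{k\in S}\alpha_k\Big)\bm{A}_{m,n}(\bm{x}_*)+\sum_{j\in S}\beta_j\Big(\prod_{k\in S\setminus\{j\}}\alpha_k\Big)\bm{B}_{m,n}^{(j)}+O(\beta^2),
\end{equation*}
where $\bm{B}_{m,n}^{(j)}=\tensor{A}\times_{k\in S\setminus\{j\}}\bm{u}_*^{(k)}\times_{j}\bm{v}^{(j)}$ is the matrix obtained by replacing only the $j$th contracting vector by its perpendicular perturbation, and the $O(\beta^2)$ remainder collects all terms of the expansion involving two or more $\beta$ factors. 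Since each such product of $\alpha$'s equals $1+O(\beta^2)$, the block difference reduces to $\bm{A}_{m,n}(\bm{x})-\bm{A}_{m,n}(\bm{x}_*)=\sum_{j\in S}\beta_j\bm{B}_{m,n}^{(j)}+O(\beta^2)$, which is $O(\beta)$ with an explicit linear leading part. The two inequalities in \eqref{eq:bound1} then follow by assembling blocks: both $\|\bm{J}-\bm{J}_*\|_2$ and $\|(\bm{J}-\bm{J}_*)\bm{x}_*\|_2$ are bounded by sums over the $d(d-1)$ off-diagonal blocks of quantities of size $O(\beta)$, where each $\|\bm{B}_{m,n}^{(j)}\|_2$ is controlled uniformly by $\|\tensor{A}\|_F$ and, via Cauchy--Schwarz, $\sum_j|\beta_j|\le d\beta$. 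Collecting these yields constants $C_1(d),C_2(d)$ depending only on $d$ and $\tensor{A}$, with remainder $O(\beta^2)$, so $\varepsilon=1$ is admissible.

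The essential point, and the one I expect to be the crux, is the cancellation behind \eqref{eq:bound2}. Because the diagonal blocks of $\bm{J}-\bm{J}_*$ vanish,
\begin{equation*}
\bm{x}_*^T(\bm{J}-\bm{J}_*)\bm{x}_*=\frac{1}{d(d-1)}\sum_{m\neq n}(\bm{u}_*^{(m)})^T\big(\bm{A}_{m,n}(\bm{x})-\bm{A}_{m,n}(\bm{x}_*)\big)\bm{u}_*^{(n)}.
\end{equation*}
The $O(\beta^2)$ prefactor deviations and the $O(\beta^2)$ remainders above are harmless, so everything hinges on the first-order contributions $\beta_j(\bm{u}_*^{(m)})^T\bm{B}_{m,n}^{(j)}\bm{u}_*^{(n)}$. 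Contracting $\bm{B}_{m,n}^{(j)}$ against $\bm{u}_*^{(m)}$ and $\bm{u}_*^{(n)}$ reinstates those two modes, producing $\tensor{A}\times_{k\neq j}\bm{u}_*^{(k)}\times_{j}\bm{v}^{(j)}$. Here I would invoke the stationarity encoded in the KKT equation \eqref{eq:kkt} at the optimizer, namely $\tensor{A}\times_{k\neq j}\bm{u}_*^{(k)}=\lambda_*\bm{u}_*^{(j)}$, so that the first-order term equals $\lambda_*(\bm{u}_*^{(j)})^T\bm{v}^{(j)}=0$ by the orthogonality $\bm{v}^{(j)}\perp\bm{u}_*^{(j)}$. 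Thus every first-order contribution vanishes identically, the whole quadratic form is $O(\beta^2)$, and \eqref{eq:bound2} holds with $\varepsilon=1$. Conceptually this cancellation is just the statement that $\bm{x}_*$ is a critical point of the associated Rayleigh quotient, so its first variation is zero; the only genuine effort is the multilinear bookkeeping needed to isolate the first-order part cleanly and to certify that the leftover terms are truly second order.
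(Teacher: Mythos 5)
Your argument follows essentially the same route as the paper's proof --- multilinear expansion of the blocks \eqref{eq:submatrix}, absorption of the $\alpha_k$-prefactors at second order, and the KKT-based cancellation $(\bm{u}_*^{(j)})^T\bm{v}^{(j)}=0$ for the quadratic form --- and it does prove the lemma \emph{as literally stated}, since the statement only asserts existence of positive constants. The substantive difference, and the real content you miss, is the identity of $C_2(d)$. You bound the first-order matrices entering $\|(\bm{J}-\bm{J}_*)\bm{x}_*\|_2$ generically by $\|\tensor{A}\|_F$, whereas the paper applies the very same orthogonality observation that you reserve for \eqref{eq:bound2} one step earlier: after contracting with $\bm{u}_*^{(n)}$, the $m$-th block of the first-order part of $(\bm{J}-\bm{J}_*)\bm{x}_*$ collapses to
\[
\frac{d-2}{(d-1)\sqrt{d}}\sum_{j\neq m}\beta_j\,\bm{A}_{*m,j}\bm{v}^{(j)},
\]
and since Theorem \ref{thm:equivalent} guarantees that $\lambda_*$ is the largest singular value of $\bm{A}_{*m,j}$ with right singular vector $\bm{u}_*^{(j)}$, while $\bm{v}^{(j)}\perp\bm{u}_*^{(j)}$, each term satisfies $\|\bm{A}_{*m,j}\bm{v}^{(j)}\|_2\leq\sigma_{m,j}$, the \emph{second} largest singular value of $\bm{A}_{*m,j}$. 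This yields the sharp constant $C_2(d)=\frac{(d-2)\sqrt{2\sum_{m<n}\sigma_{m,n}^2}}{d-1}$. The distinction is not cosmetic: Theorem \ref{thm:convergence} cites exactly this $C_2(d)$, and its hypothesis $C_2(d)/\gamma_*<1$ is a meaningful spectral-gap condition only because $C_2(d)$ is built from second singular values; since $\gamma_*\leq 2\lambda_*\leq 2\|\tensor{A}\|_F$, a constant of size roughly $d^2\|\tensor{A}\|_F$ could essentially never satisfy that hypothesis, and the convergence theorem downstream would be vacuous. The fix costs you nothing beyond machinery you already deploy: apply your stationarity/orthogonality cancellation to the first-order part of $(\bm{J}-\bm{J}_*)\bm{x}_*$ \emph{before} taking norms, rather than only to the scalar $\bm{x}_*^T(\bm{J}-\bm{J}_*)\bm{x}_*$.

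Two minor remarks. First, your choice $\varepsilon=1$ is admissible because $|\beta_j|\leq\sqrt{d}\,\beta$ makes all discarded terms genuinely $O(\beta^2)$; the paper's $\varepsilon\in(0,1)$ is simply more cautious. Second, your treatment of \eqref{eq:bound2} via $\tensor{A}\times_{k\neq j}\bm{u}_*^{(k)}=\lambda_*\bm{u}_*^{(j)}$ is actually slightly cleaner than the paper's, as it does not invoke the simplicity of $\lambda_*$ at that step.
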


\begin{proof}
Let $\varepsilon$ be a constant in $(0,1)$,
we first prove that
\begin{equation}\label{eq:0000}
    1-\alpha_n\leq O(\beta^{1+\varepsilon})
\end{equation}
holds for all $n=1,2,\cdots,d$. From $\alpha_n^2+\beta_n^2=1$, it is easy to know that
    \begin{equation}\label{eq:bound-00}
        1-\alpha_n = 1-\sqrt{1-\beta_n^2} = \frac{\beta_n^2}{1+\sqrt{1-\beta_n^2}}=\mathcal{O}(\beta_n^2),
    \end{equation}
then we have
    \[
    \frac{\beta_{n}^2}{\beta^{1+\varepsilon}}=\frac{d^{\frac{1+\varepsilon}{2}}\beta_n^{2}}{(\sum\limits_{n=1}^d\beta_n^2)^{\frac{1+\varepsilon}{2}}}\leq\frac{d^{\frac{1+\varepsilon}{2}}\beta_n^2}{\beta_n^{1+\varepsilon}},
    \]
    and
     \[
    \lim\limits_{\beta\rightarrow0}\frac{\beta_n^2}{\beta^{1+\varepsilon}}=0,
    \]
    which implies that $1-\alpha_n\leq O(\beta^{1+\varepsilon})$.

    For $\|\bm{J}-\bm{J}_*\|_2$, by the definition of $\bm{J}(\bm{x})$, we have
\begin{equation}\label{eq:bound-01}
    \begin{split}
        \bm{J}-\bm{J}_* & =  \frac{1}{d-1}\left[\begin{array}{cccc}
       \bm{0}  & \bm{A}_{1,2}-\bm{A}_{*1,2} & \cdots & \bm{A}_{1,d}-\bm{A}_{*1,d}   \\
        \bm{A}_{1,2}^T-\bm{A}_{*1,2}^{T} & \bm{0} & \cdots & \bm{A}_{2,d}-\bm{A}_{*2,d} \\
        \vdots & \vdots & \ddots & \vdots\\
         \bm{A}_{1,d}^T-\bm{A}_{*1,d}^{T} & \bm{A}_{2,d}^T-\bm{A}_{*2,d}^{T} & \cdots & \bm{0}
    \end{array}\right]. \\
    \end{split}
\end{equation}
Then by Eqs. \eqref{eq:submatrix}, \eqref{eq:0000}, and \eqref{eq:bound-00}, we can obtain
\begin{equation}\label{eq:bound-02}
    \begin{split}
        \|\bm{A}_{m,n} - \bm{A}_{*m,n}\|_2 & \leq \sum\limits_{k\neq m,n}\beta_k\|\tensor{A}_{*m,n,k}\times_k\bm{v}^{(k)}\|_2+O(\beta^{1+\varepsilon}),\ \forall m\neq n,
    \end{split}
\end{equation}
where
\[
\tensor{A}_{*m,n,k} = \tensor{A}\times_{\mathcal{I}}\bm{v}^{\mathcal{I}}\ \text{with}\ \mathcal{I}=\{1,2,\cdots,d\}/\{m,n,k\}.
\]
Since
\begin{equation*}\label{eq:bound-03}
\begin{split}
\|\tensor{A}_{*m,n,k}\times_k\bm{v}^{(k)}\|_2
&=\max\limits_{\bm{w}^{(m)}\in\mathbb{S}^{I_m-1},\bm{w}^{(n)}\in\mathbb{S}^{I_n-1}}\sum\limits_{k\neq m,n}\beta_k\tensor{A}_{*m,n,k}\times_m\bm{w}^{(m)}\times_{n}\bm{w}^{(n)}\times_k\bm{v}^{(k)},
\end{split}
\end{equation*}
it follows that
\begin{equation}\label{eq:bound-04}
\begin{split}
    \|\bm{A}_{m,n} - \bm{A}_{*m,n}\|_2&\leq\lambda_*\sum\limits_{k\neq m,n}|\beta_k|+O(\beta^{1+\varepsilon})\\
    &\leq\lambda_*(d-2)\sqrt{\sum\limits_{k\neq m,n}\beta_k^2}+O(\beta^{1+\varepsilon})\leq\lambda_*(d-2)\sqrt{d}\beta+O(\beta^{1+\varepsilon}).
\end{split}
\end{equation}
Furthermore, we know
\begin{equation*}\label{eq:bound-05}
    \begin{split}
        \|\bm{J}-\bm{J}_*\|_2&=\max\limits_{\bm{y}\in\mathbb{S}^{\sum\limits_{n=1}^dI_n-1}}\bm{y}^{T}(\bm{J}-\bm{J}_*)\bm{y},
    \end{split}
\end{equation*}
and combined with Eqs. \eqref{eq:bound-01} and \eqref{eq:bound-04}, we can easy to prove that $\|\bm{J}-\bm{J}_*\|_2$ is bounded by
\[
\frac{\lambda_*(d-1)(d-2)^2\sqrt{d}\beta}{2}+O(\beta^{1+\varepsilon}).
\]

We then consider $\|(\bm{J}-\bm{J}_*)\bm{x}_*\|_2$.
From Eqs. \eqref{eq:bound-01} and \eqref{eq:bound-02}, we have
\begin{equation}\label{eq:bound-10}
\begin{split}
    (\bm{J}-\bm{J}_*)\bm{x}_* &= \frac{1}{(d-1)\sqrt{d}}\left[\begin{array}{cccc}
       \bm{0}  & \bm{A}_{1,2}-\bm{A}_{*1,2} & \cdots & \bm{A}_{1,d}-\bm{A}_{*1,d}   \\
        \bm{A}_{1,2}^T-\bm{A}_{*1,2}^T & \bm{0} & \cdots & \bm{A}_{2,d}-\bm{A}_{*2,d} \\
        \vdots & \vdots & \ddots & \vdots\\
         \bm{A}_{1,d}^T-\bm{A}_{*1,d}^{T} & \bm{A}_{2,d}^T-\bm{A}_{*2,d}^{T} & \cdots & \bm{0}
    \end{array}\right]
    \left[\begin{array}{c}
          \bm{u}^{(1)}_* \\
         \bm{u}^{(2)}_* \\
         \vdots \\
         \bm{u}^{(d)}_*\\
    \end{array}\right] \\
    & \leq  \frac{d-2}{(d-1)\sqrt{d}}\left[\begin{array}{c}
          \sum\limits_{k\neq1}\beta_k\bm{A}_{*1,k}\bm{v}^{(k)} \\
          \sum\limits_{k\neq2}\beta_k\bm{A}_{*2,k}\bm{v}^{(k)} \\
         \vdots  \\
          \sum\limits_{k\neq d}\beta_k\bm{A}_{*d,k}\bm{v}^{(k)} \\
    \end{array}\right] + O(\beta^{1+\varepsilon}).\\
\end{split}
\end{equation}
Further, by the triangle inequality and Cauchy's inequality, we can give an upper bound of $\|(\bm{J}-\bm{J}_*)\bm{x}_*\|_2$ as follows
\begin{equation*}\label{eq:bound-11}
    \frac{(d-2)\beta}{d-1}\sqrt{2\sum\limits_{m<n}\sigma_{m,n}^2}+O(\beta^{1+\varepsilon}),
\end{equation*}
where $\sigma_{m,n}$ is the second largest singular value of the matrix $\bm{A}_{*m,n}$.

For $\bm{x}_*^T(\bm{J}-\bm{J}_*)\bm{x}_*$, by Eq. \eqref{eq:bound-10}, it is easy to check that
\begin{equation*}\label{eq:bound-20}
    \bm{x}_*^T(\bm{J}-\bm{J}_*)\bm{x}_*\leq\frac{d-2}{(d-1)d}\sum\limits_{n=1}^d\sum\limits_{k\neq n}\beta_k\bm{u}_*^{(n)}\bm{A}_{*n,k}\bm{v}^{(k)}+O(\beta^{1+\varepsilon}).
\end{equation*}
And from Theorem \ref{thm:equivalent}, we know that $\lambda_*$ is also the largest singular value of $\bm{A}_{*n,k}$ for all $n,k=1,2,\cdots,d$ and $k\neq n$. Since $\lambda_*$ is single and $\bm{v}^{(n)}\perp\bm{u}_*^{(n)}$, it is easy to know that $\bm{u}_*^{(n)}\bm{A}_{*n,k}\bm{v}^{(k)}=0$, which implies that Eq. \eqref{eq:bound2} holds.
\end{proof}

Now we are ready to give the convergence theory of the proposed HOSCF algorithm in Theorem \ref{thm:convergence}, which illustrates that
HOSCF is locally $q$-linearly convergent and gives an estimate of the convergence rate.

\begin{thm}\label{thm:convergence}
    Let $(\lambda_*;\bm{x}_*)$ be defined as the same as Lemma \ref{lem:J-bound}, and $\{\bm{x}_k\}$ be the sequence generated by HOSCF with initial guess $\bm{x}_0$.
    If the gap $\gamma_*$ between $\lambda_*$ and the second largest eigenvalue of $\bm{J}_*$ satisfies
    $\frac{C_2(d)}{\gamma_*}<1$,
    and $\bm{x}_0$ is sufficiently close to $\bm{x}_*$, then there exists a sequence $\{\tau_k\}$ such that
    \begin{equation}\label{eq:convergence-0}
        \beta^{k+1}\leq\tau_k\beta^k
    \end{equation}
    and
    \begin{equation}\label{eq:convergence-1}
        \lim\limits_{k\rightarrow+\infty}\tau_k=\frac{C_2(d)}{\gamma_*},
    \end{equation}
    where $\beta^k=\sqrt{\frac{\sum\limits_{n=1}^d(\beta_n^{k})^2}{d}}$ and
    $C_2(d)=\frac{(d-2)\sqrt{2\sum\limits_{m<n}\sigma_{m,n}^2}}{d-1}$ are the same as Lemma \ref{lem:J-bound}.
\end{thm}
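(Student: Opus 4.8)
The plan is to track the scalar error $\beta^k$ through one HOSCF step by interpreting it as a lower bound for the sine of the angle between the current iterate and the target, and then to feed the two lemmas into the $\sin\Theta$ estimate of Lemma \ref{lem:perturbation-bound} specialized to $r=1$. The HOSCF update sets $\bm{x}_{k+1}$ to the largest-magnitude eigenvector of $\bm{J}_k=\bm{J}(\bm{x}_k)$; by Theorem \ref{thm:nepv-property} this is, up to the sign symmetry (which I fix so that the block decomposition has $\alpha_n^{k+1}>0$), the eigenvector for the \emph{largest} eigenvalue $\hat{\lambda}_1^{(k)}$ of $\bm{J}_k$, while $\bm{x}_*$ is the top eigenvector of $\bm{J}_*$. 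Thus $\bm{x}_{k+1}$ and $\bm{x}_*$ are the leading eigenvectors of the two symmetric matrices $\hat{\bm{A}}=\bm{J}_k$ and $\bm{A}=\bm{J}_*$ with difference $\bm{E}=\bm{J}_k-\bm{J}_*$, which is exactly the setting of Lemma \ref{lem:perturbation-bound}.

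First I would relate $\beta^{k+1}$ to the angle. Writing the part of $\bm{x}_{k+1}$ perpendicular to $\bm{x}_*$ as the sum of the ``within-block'' vector $\tfrac{1}{\sqrt d}[\beta_n^{k+1}\bm{v}^{(n)}]_n$, of norm exactly $\beta^{k+1}$, and the ``cross-block imbalance'' vector $\tfrac{1}{\sqrt d}[(\alpha_n^{k+1}-\bar\alpha)\bm{u}_*^{(n)}]_n$ with $\bar\alpha=\tfrac1d\sum_n\alpha_n^{k+1}$, and noting that $\bm{v}^{(n)}\perp\bm{u}_*^{(n)}$ makes these two pieces mutually orthogonal and both orthogonal to $\bm{x}_*$ while the second is $O((\beta^{k+1})^2)$ since $\alpha_n=\sqrt{1-\beta_n^2}$, I obtain the clean one-sided inequality $\beta^{k+1}\le|\sin\theta(\bm{x}_{k+1},\bm{x}_*)|$. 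This is exactly the direction needed, so no higher-order correction enters the upper bound.

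Next I would apply Lemma \ref{lem:perturbation-bound} with $r=1$, using the projection bound $\|(\bm{I}-\hat{\bm{P}})\bm{E}\bm{P}\|_2\le\|\bm{E}\bm{x}_*\|_2=\|(\bm{J}_k-\bm{J}_*)\bm{x}_*\|_2$ so that the numerator is controlled by the \emph{sharper} second estimate of Lemma \ref{lem:J-bound}, namely $C_2(d)\beta^k+O((\beta^k)^{1+\varepsilon})$, rather than by the coarser $\|\bm{J}_k-\bm{J}_*\|_2$; this is the reason the limiting rate involves $C_2(d)$ and not $C_1(d)$. For the denominator $\hat\lambda_1^{(k)}-\lambda_2$ I would combine the eigenvalue bound of Lemma \ref{lem:perturbation-bound} with the first estimate of Lemma \ref{lem:J-bound}, giving $\hat\lambda_1^{(k)}\ge\lambda_*-C_1(d)\beta^k-O((\beta^k)^{1+\varepsilon})$ and hence $\hat\lambda_1^{(k)}-\lambda_2\ge\gamma_*-C_1(d)\beta^k-O((\beta^k)^{1+\varepsilon})$. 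Combining, the single step obeys
\[
\beta^{k+1}\le\frac{C_2(d)\beta^k+O((\beta^k)^{1+\varepsilon})}{\gamma_*-C_1(d)\beta^k-O((\beta^k)^{1+\varepsilon})}=:\tau_k\beta^k,\qquad \tau_k=\frac{C_2(d)+O((\beta^k)^{\varepsilon})}{\gamma_*-C_1(d)\beta^k-O((\beta^k)^{1+\varepsilon})},
\]
which is the per-step inequality \eqref{eq:convergence-0}.

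Finally I would close the argument with a contraction/bootstrap step, which I expect to be the main obstacle, since all of the above estimates and the positivity of the gap $\hat\lambda_1^{(k)}-\lambda_2$ are only valid while $\bm{x}_k$ stays in a neighborhood of $\bm{x}_*$ in which each block admits the decomposition of Lemma \ref{lem:J-bound} with $\alpha_n^{k}>0$. Using the hypothesis $C_2(d)/\gamma_*<1$, I would fix $\rho$ with $C_2(d)/\gamma_*<\rho<1$ and a radius $\delta>0$ small enough that $\beta^k\le\delta$ forces $\tau_k\le\rho$ (possible because the $O((\beta^k)^{\varepsilon})$ and $O(\beta^k)$ perturbations vanish as $\delta\to0$ while $C_2(d)/\gamma_*<\rho$). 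Taking $\bm{x}_0$ close enough that $\beta^0\le\delta$, an induction then shows the iterates never leave the ball and $\beta^{k+1}\le\rho\beta^k$, hence $\beta^k\le\rho^k\beta^0\to0$. With $\beta^k\to0$ established, $\hat\lambda_1^{(k)}\to\lambda_*$ and the $O$-terms in $\tau_k$ vanish, yielding $\lim_{k\to\infty}\tau_k=C_2(d)/\gamma_*$, which is \eqref{eq:convergence-1}. The delicate points are precisely this bootstrap — the circular dependence of the gap lower bound on $\beta^k$ — together with the orientation bookkeeping forced by HOSCF using the largest-\emph{magnitude} eigenpair, both of which are resolved by Theorem \ref{thm:nepv-property} and by shrinking $\delta$.
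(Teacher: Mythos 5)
Your proposal is correct, and it follows the same overall strategy as the paper: a one-step contraction obtained by combining the $\sin\Theta$ perturbation bound (Lemma \ref{lem:perturbation-bound} with $r=1$, $\bm{A}=\bm{J}_*$, $\hat{\bm{A}}=\bm{J}_k$) with the perturbation estimates of Lemma \ref{lem:J-bound}, plus the lower bound $\beta^{k+1}\le|\sin\theta(\bm{x}_{k+1},\bm{x}_*)|$. Within that shared frame, though, your handling of the numerator is genuinely different and cleaner: you exploit the rank-one structure of $\bm{P}_*=\bm{x}_*\bm{x}_*^T$ to bound $\|(\bm{I}-\bm{P}_{k+1})(\bm{J}_k-\bm{J}_*)\bm{P}_*\|_2\le\|(\bm{J}_k-\bm{J}_*)\bm{x}_*\|_2$ directly, whereas the paper first swaps $\bm{P}_{k+1}$ for $\bm{P}_*$ via a triangle inequality (incurring the weaker denominator $\gamma_*-2\|\bm{J}_k-\bm{J}_*\|_2$) and then passes to the Frobenius norm through a trace identity, ending with the bound $\|(\bm{J}_k-\bm{J}_*)\bm{x}_*\|_2+|\bm{x}_*^T(\bm{J}_k-\bm{J}_*)\bm{x}_*|$; as a consequence the paper needs all three estimates of Lemma \ref{lem:J-bound}, while you need only the $C_1(d)$ bound (for the Weyl/gap control of the denominator) and the $C_2(d)$ bound, never Eq. \eqref{eq:bound2}. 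Your proof of $\beta^{k+1}\le|\sin\theta|$ via the orthogonal within-block/cross-block decomposition also differs from the paper's route (which uses the Cauchy--Schwarz inequality on $1-\bigl(\tfrac1d\sum_n\alpha_n^{k+1}\bigr)^2$), though both are equally valid. Finally, the two points you flag as delicate --- the bootstrap induction keeping $\beta^k\le\delta$ so the gap stays positive, and the orientation fixing forced by HOSCF's use of the largest-\emph{magnitude} eigenpair --- are handled explicitly in your argument but are essentially glossed over in the paper (it simply sets $\tau_k=\tfrac{C_2(d)}{\gamma_*}+O((\beta^k)^{\varepsilon})$ and asserts the limit, and it silently identifies the largest-magnitude eigenpair of $\bm{J}_k$ with the largest one); your version is therefore slightly tighter and more complete, at the cost of some bookkeeping the paper avoids.
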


\begin{proof}
    According to Lemma \ref{lem:perturbation-bound}, we can obtain
    \begin{equation}\label{eq:convergence-00}
        \begin{split}
            |\sin\angle\theta(\bm{x}_{k+1},\bm{x}_*)|&\leq\frac{\|(\bm{I}-\bm{P}_{k+1})(\bm{J}_k-\bm{J}_*)\bm{P}_*\|_2}{\gamma_*-\|\bm{J}_k-\bm{J}_*\|_2}\\
            &\leq\frac{\|(\bm{I}-\bm{P}_*)(\bm{J}_k-\bm{J}_*)\bm{P}_*\|_2}{\gamma_*-\|\bm{J}_k-\bm{J}_*\|_2} + \frac{|\sin\angle\theta(\bm{x}_{k+1},\bm{x}_*)|\|\bm{J}_k - \bm{J}_*\|_2}{\gamma_*-\|\bm{J}_k-\bm{J}_*\|_2},
        \end{split}
    \end{equation}
    it follows that
    \begin{equation*}\label{eq:convergence-01}
    \begin{split}
        |\sin\angle\theta(\bm{x}_{k+1},\bm{x}_*)| &\leq \frac{\|(\bm{I}-\bm{P}_*)(\bm{J}_k-\bm{J}_*)\bm{P}_*\|_2}{\gamma_*-2\|\bm{J}_k-\bm{J}_*\|_2}\\
        &\leq\frac{\|(\bm{I}-\bm{P}_*)(\bm{J}_k-\bm{J}_*)\bm{P}_*\|_F}{\gamma_*-2\|\bm{J}_k-\bm{J}_*\|_2}, \\
    \end{split}
    \end{equation*}
    where $\bm{P}_k = \bm{x}_k\bm{x}_k^T$ and $\bm{P}_*=\bm{x}_*\bm{x}_*^T$. Since
    \[
    |\sin\angle\theta(\bm{x}_{k+1},\bm{x}_*)| = \sqrt{1-\cos^2\angle\theta(\bm{x}_{k+1},\bm{x}_*)},
    \]
    we have
    \begin{equation}\label{eq:convergence-02}
    \begin{split}
         |\sin\angle\theta(\bm{x}_{k+1},\bm{x}_*)| & = \sqrt{1 -\left(\sum\limits_{n=1}^d\frac{\alpha^{k+1}_n}{d}\right)^2} \geq\sqrt{\frac{\sum\limits_{n=1}^d\left(1-\left(\alpha_n^{k+1}\right)^2\right)}{d}}\\
         & =  \sqrt{\frac{\sum\limits_{n=1}^{d}\left(\beta^{k+1}_n\right)^2}{d}} =\beta^{k+1}.
    \end{split}
    \end{equation}
    We then give an upper bound of $\|(\bm{I}-\bm{P}_*)(\bm{J}_k-\bm{J}_*)\bm{P}_*\|_F$ by Lemma \ref{lem:J-bound}. According to the property of the Frobenius norm,
    \begin{equation}\label{eq:convergence-03}
        \begin{split}
             \|(\bm{I}-\bm{P}_{*})(\bm{J}_k-\bm{J}_*)\bm{P}_{*}\|_{F}^2 & = \text{tr}((\bm{I}-\bm{P}_*)(\bm{J}_k-\bm{J}_*)\bm{P}_*^2(\bm{J}_k-\bm{J}_*)(\bm{I}-\bm{P}_*))\\
             & = \text{tr}((\bm{J}_k-\bm{J}_*)\bm{P}_*^2(\bm{J}_k-\bm{J}_*)(\bm{I}-\bm{P}_*)^2)\\
             & = \text{tr}((\bm{J}_k-\bm{J}_*)\bm{P}_*(\bm{J}_k-\bm{J}_*)(\bm{I}-\bm{P}_*))\\
             & = \text{tr}((\bm{J}_k-\bm{J}_*)\bm{P}_*(\bm{J}_k-\bm{J}_*))-\text{tr}((\bm{J}-\bm{J}_*)\bm{P}_*(\bm{J}-\bm{J}_*)\bm{P}_*),\\
        \end{split}
    \end{equation}
    where $\text{tr}(\cdot)$ denotes the trace of a matrix. Since the trace operation satisfies commutative law, we can further obtain
    \begin{equation}\label{eq:convergence-04}
        \begin{split}
            \text{tr}\left(\left(\bm{J}_k-\bm{J}_*\right)\bm{P}_*\left(\bm{J}_k-\bm{J}_*\right)\right) & =  \bm{x}_*^T\left(\bm{J}_k-\bm{J}_*\right)^2\bm{x}_* = \|\left(\bm{J}_k-\bm{J}_*\right)\bm{x}_*\|_2^2, \\
        \end{split}
    \end{equation}
    and
    \begin{equation}\label{eq:convergence-05}
        \begin{split}
            \text{tr}\left(\left(\bm{J}_k-\bm{J}_*\right)\bm{P}_*\left(\bm{J}_k-\bm{J}_*\right)\bm{P}_*\right) & = \left(\bm{x}_*^T\left(\bm{J}_k-\bm{J}_*\right)\bm{x}_*\right)^2.
        \end{split}
    \end{equation}
    Combined with Eqs. \eqref{eq:convergence-03}, \eqref{eq:convergence-04}, and \eqref{eq:convergence-05}, we have
    \begin{equation}\label{eq:convergence-06}
    \begin{split}
         \|(\bm{I}-\bm{P}_{*})\left(\bm{J}_k-\bm{J}_*\right)\bm{P}_{*}\|_{F}&\leq \sqrt{\|\left(\bm{J}_k-\bm{J}_*\right)\bm{x}_*\|_2^2 + \left(\bm{x}_*^T\left(\bm{J}_k-\bm{J}_*\right)\bm{x}_*\right)^2} \\
         & \leq\|\left(\bm{J}_k-\bm{J}_*\right)\bm{x}_*\|_2 + |\bm{x}_*^T\left(\bm{J}_k-\bm{J}_*\right)\bm{x}_*| \\
         &\leq C_2(d)\beta^k+O\left((\beta^k)^{1+\varepsilon}\right).
    \end{split}
    \end{equation}
   Further, by Eqs. \eqref{eq:convergence-00}, and \eqref{eq:convergence-02}, and \eqref{eq:convergence-06}, we can easy to obtain that
    \begin{equation}\label{eq:convergence-07}
    \begin{split}
        \beta^{k+1}&\leq\frac{C_2(d)\beta^k}{\gamma_*-2C_1(d)\beta_k}+O\left((\beta^k)^{1+\varepsilon}\right)\\
        &=\frac{C_2(d)}{\gamma_*}\beta^k+O\left((\beta^k)^{1+\varepsilon}\right)=\left(\frac{C_2(d)}{\gamma_*}+O\left((\beta^{k})^{\varepsilon}\right)\right)\beta^k.
        \end{split}
    \end{equation}
Since $\frac{C_2(d)}{\gamma_*}<1$ and $\beta^0$ is sufficiently small, and let $\tau_k = \frac{C_2(d)}{\gamma_*}+O((\beta^{k})^{\varepsilon})$, it is easy to know that
\[
\lim\limits_{k\rightarrow\infty}\tau_k=\frac{C_2(d)}{\gamma_*},
\]
which means that Eqs. \eqref{eq:convergence-0} and \eqref{eq:convergence-1} hold.
\end{proof}

\section{Numerical experiments}\label{sec:section6}
In this section, we will demonstrate the advantages of the proposed HOSCF algorithm and its improved version through numerical experiments, including two parts.
The first part compares the convergence of the HOSCF and iHOSCF algorithms with other decoupling algorithms, including Jacobi-HOPM, Jacobi-ASVD, and GRQI, with several examples. We carry out these algorithms in MATLAB R2019b, and the implementations of all the tested algorithms are based on Tensor Toolbox v3.1 \cite{Bader}.
The second part aims to test the parallel scalability of the HOSCF algorithm. We implement the HOSCF algorithm in C++ language with OpenMP multi-threading \cite{chandra2001parallel} and MPI \cite{graham2006open} multi-processes, and the involved linear algebra operations such as eigensolver available from the Intel MKL library \cite{wang2014intel}. The numerical experiment is conducted on a CPU cluster, each node equipped with an Intel Xeon Platinum 8358P CPU of 2.60 GHz.

\subsection{Convergence test}
In the first part of the numerical experiments, we compare the convergence of HOSCF and iHOSCF algorithms with other decoupling algorithms.
To be fair, these algorithms update each factor vector only once in one iteration, and the initial factor vectors $\{\bm{u}^{(n)}_{0}:n=1,2\cdots,d\}$ are generated by a uniform distribution of interval $[0,1]$. We refer to \cite{cai2018eigenvector} and use \eqref{eq:stop-3} as the stopping criteria. Unless mentioned otherwise, the error tolerance is set to $\texttt{tol}=1.0\times 10^{-4}$, and the maximum number of iterations is limited to $500$.
As a reference, we use the results obtained by the HOPM algorithm as a baseline. Specifically, we run the function $\texttt{tucker\_als}$ in Tensor Toolbox for 500 iterations, again using the same randomly generated initial guess.

\subsubsection{Example 1}
The input tensor $\tensor{A}\in\mathbb{R}^{I\times I\cdots\times I}$ is randomly generated, and each element of $\tensor{A}$ follows the Gaussian distribution $\mathcal{N}(0,1)$. We set the tensor size $I$ to $10$, and increase the order of $\tensor{A}$ from $d=3$ to $6$.
Table \ref{table:lambda} and Figure \ref{fig:iterations} show the average value of $\lambda$ and the average number of iterations for all tested algorithms with 50 initial guesses.

\begin{table}%[H]
	\renewcommand\arraystretch{1.0}
	\begin{center}
		\caption{\normalsize Values of $\lambda$ by all tested algorithms for Example 1.}\label{table:lambda}
		\begin{tabular}{c|c|c|c|c}
			\toprule
			Algorithms & $d=3$ & $d=4$ & $d=5$ & $d=6$ \\
			\midrule
			$\texttt{tucker\_als}$ & 8.26 $\pm$ 0.48 & 9.25 $\pm$0.43 & 11.13 $\pm$0.39 & 12.49 $\pm$ 0.35 \\

            \midrule

            Jacobi-HOPM &  8.25 $\pm$ 0.46 & 9.75 $\pm$ 0.79 & 10.20$\pm$ 2.59 & 5.36$\pm$ 5.66 \\

            Jacobi-ASVD &  4.50 $\pm$ 4.06 & 1.75 $\pm$ 2.46 & 6.21 $\pm$ 5.38 & 0.85 $\pm$ 0.63 \\

            GRQI &  1.61 $\pm$ 0.77 & 1.52 $\pm$ 0.95 & 1.27 $\pm$ 0.95 & 1.16 $\pm$ 0.73 \\
            \midrule
            HOSCF &  8.12 $\pm$ 0.52 & 9.38 $\pm$ 0.54 & 11.23 $\pm$ 0.33 & 12.43 $\pm$ 0.40 \\
            iHOSCF & 8.25 $\pm$ 0.50 & 9.58 $\pm$ 0.57 & 11.25 $\pm$ 0.34 & 12.46 $\pm$ 0.38 \\
			\bottomrule
		\end{tabular}
	\end{center}
\end{table}

Figure \ref{fig:iterations} shows that the decoupling algorithms, Jacobi-ASVD and GRQI, do not converge in most cases, and then from Table \ref{table:lambda}, we observe that they rarely obtain the value of $\lambda$ consistent with $\texttt{tucker\_als}$. For the Jacobi-HOPM algorithm, although the rank-one approximation of $\tensor{A}$ consistent with $\texttt{tucker\_als}$ can be obtained when $d=3$ and $4$, it requires more iterations than the proposed HOSCF algorithm. Specifically, the Jacobi-HOPM algorithm is $2.16\times$ and $2.87\times$ slower than the HOSCF algorithm for the cases of $d=3$ and $4$, respectively. As the order of $\tensor{A}$ increases, Figure \ref{fig:iterations} shows that the convergence of the Jacobi-HOPM algorithm is broken, then the obtained value of $\lambda$ shown in Table \ref{table:lambda} is incorrect.
And as we can see in Table \ref{table:lambda} and Figure \ref{fig:iterations}, for tensors of different orders, HOSCF and iHOSCF algorithms are guaranteed to converge to the rank-one approximation of $\tensor{A}$ consistent with $\texttt{tucker\_als}$. From Figure \ref{fig:iterations}, we can also observe that the iHOSCF algorithm reduces the number of iterations by approximately half compared to the HOSCF algorithm. Moreover, it is worth mentioning that the convergence speed of the HOSCF decreases with the growth of the order of $\tensor{A}$, which matches the convergence theory established in Section \ref{sec:section5}.

\begin{figure}%[H]
	\centering
	\includegraphics[width=1.00\textwidth,height=0.50\textwidth]{./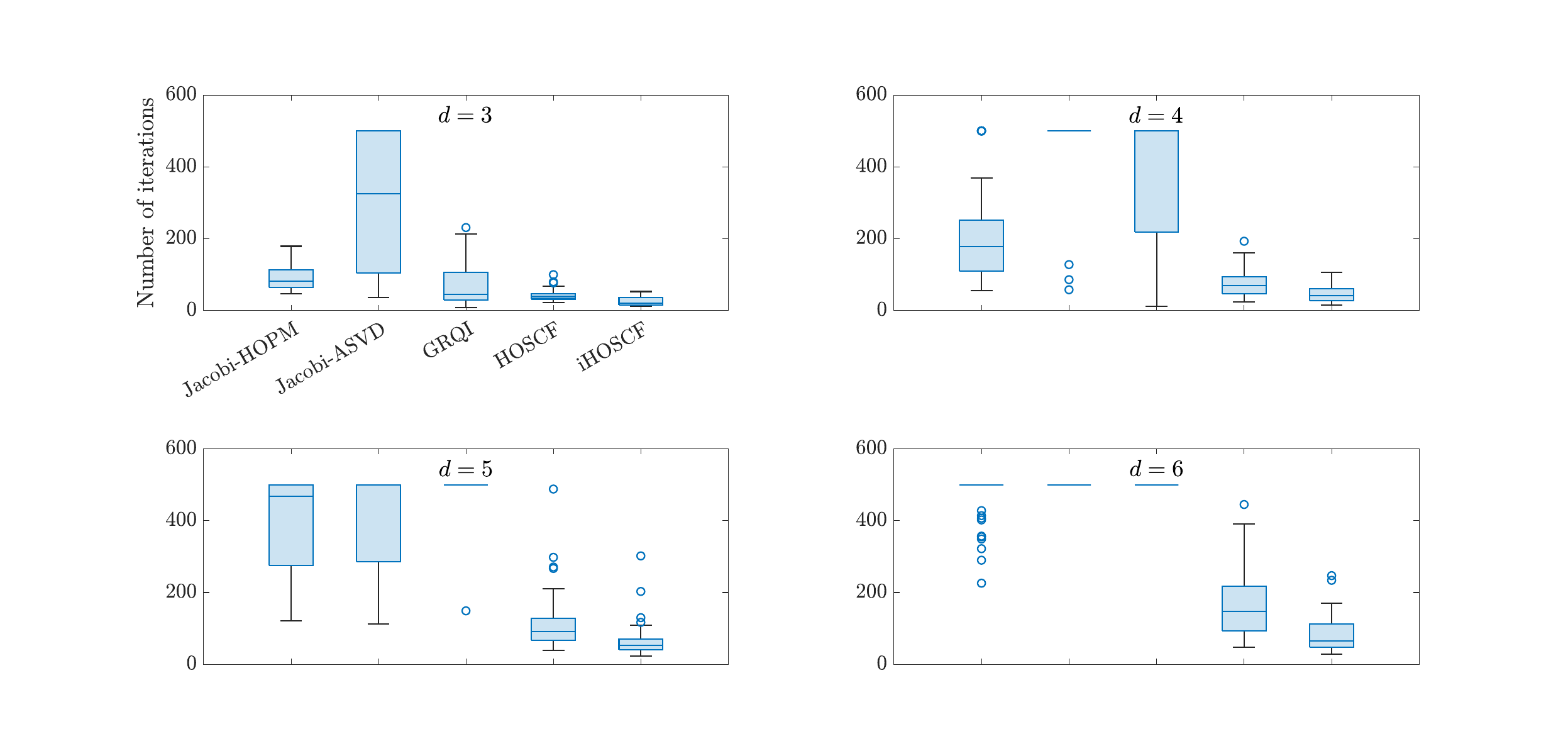}
	\caption{Number of iterations by all tested algorithms for Example 1.}\label{fig:iterations}
\end{figure}

%\subsubsection{\red{Rank-one approximation of tensors with low-rank representation}}

\subsubsection{Example 2}\label{subsec:12}

In this example, we test the convergence of the proposed HOSCF and iHOSCF algorithms using three tensors from \cite{nie2014semidefinite}, which include third-, fourth-, and fifth-order tensors as listed in Table \ref{table:tensors}. The accuracy of the algorithm is measured by the best rank-one approximation ratio of the tensor $\bm{\mathcal{A}}$, defined in \cite{qi2011best} as $\rho(\bm{\mathcal{A}})=|\lambda|/{\|\bm{\mathcal{A}}\|_{F}}$, and the reference values of $\rho(\bm{\mathcal{A}})$ obtained by the HOPM algorithm are also shown in Table \ref{table:tensors}.
Table \ref{table:ex2} reports the average value of $\rho(\bm{\mathcal{A}})$ and the average number of iterations for all tested algorithms with 50 initial guesses.
%which is defined in Ref. \cite{qi2011best}.

\begin{table}%[H]
	\small
 \renewcommand\arraystretch{1.0}
 \setlength{\tabcolsep}{1.0mm}{
	\begin{center}
		\caption{\normalsize Summary of tensors used for Example 2.}\label{table:tensors}
		\begin{tabular}{c|c|c|c|c}
			\toprule
  Name & Order & Dimension & $\bm{\mathcal{A}}_{i_{1},i_{2},\cdots,i_{d}}$ & $\rho(\bm{\mathcal{A}})$ \\
  \midrule
EXP & $d=3$ &  $30\times30\times30$ & $\sum\limits_{j=1}^{d}(-1)^{j+1}j\mathrm{exp}(-i_{j})$ & 0.82$\pm$0.00 \\
\midrule
ARCSIN & $d=4$ &  $20\times20\times20\times20$& $\left\{\begin{array}{cc}
       \sum\limits_{j=1}^{d}\arcsin\left((-1)^{i_{j}\frac{j}{i_{j}}}\right) & \text{for all}\ i_{j}\geq j \\
       0 & \text{otherwise}
   \end{array}\right.$ & 0.66$\pm0.01$
  \\
  \midrule
TAN & $d=5$ & $10\times10\times10\times10\times10$   & $\tan\left(\sum\limits_{j=1}^{d}(-1)^{j+1}\frac{i_{j}}{j}\right)$ & $0.27\pm0.08$ \\
			\bottomrule
		\end{tabular}
	\end{center}}
\end{table}

From Table \ref{table:ex2}, it can be observed that all tested algorithms, except the GRQI algorithm, can obtain the exact value of $\rho(\bm{\mathcal{A}})$ for the tensor EXP, and our proposed algorithms converges more than $2\times$ faster than Jacobi-HOPM and -ASVD. For the tensor ARCSIN, only the Jacobi-HOPM and our proposed algorithms obtain the same value of $\rho(\bm{\mathcal{A}})$ as $\texttt{tucker\_als}$, but the convergence speed of the HOSCF algorithm is much faster than Jacobi-HOPM, nearly $35\times$. For the tensor TAN, the values of $\rho(\bm{\mathcal{A}})$ obtained by all comparison algorithms are incorrect, while HOSCF and iHOSCF can still find the best rank-one approximation of $\bm{\mathcal{A}}$ with a faster convergence speed. Moreover, we also observe that the iHOSCF algorithm is $1.52\times\sim3.48\times$ faster than the HOSCF algorithm, and is more stable, especially for the tensor TAN.

\begin{table}%[H]
	\small
 \renewcommand\arraystretch{1.0}
 \setlength{\tabcolsep}{1.0mm}{
	\begin{center}
		\caption{\normalsize Values of $\rho(\bm{\mathcal{A}})$ and the number of iterations by all tested algorithms for Example 2.}\label{table:ex2}
		\begin{tabular}{c|c|c|c|c|c|c}
			\toprule
			\multirow{2}{*}{Algorithms} & \multicolumn{2}{c|}{EXP} & \multicolumn{2}{c|}{ARCSIN} & \multicolumn{2}{c}{TAN}\\
			\cline{2-7}
            & $\rho(\bm{\mathcal{A}})$ & $\#$Iterations & $\rho(\bm{\mathcal{A}})$ & $\#$Iterations & $\rho(\bm{\mathcal{A}})$ & $\#$Iterations\\
            \midrule

            Jacobi-HOPM &  0.82$\pm0.00$ & 26.00$\pm$3.60 & 0.66$\pm0.00$ & 498.46$\pm$6.80 & 0.14$\pm0.14$ & $378.50\pm151.67$ \\

            Jacobi-ASVD &  0.82$\pm0.00$ & 24.14$\pm$2.96 & 0.43$\pm$0.21 & 275.84$\pm$245.34 & 0.08$\pm$0.12 &$409.10\pm183.67$ \\

            GRQI &  0.57$\pm$0.23 & 6.04$\pm$1.14 & 0.56$\pm$0.10 & 6.56$\pm$1.18 & 0.01$\pm$0.01&$464.24\pm104.71$ \\
            \midrule
            HOSCF &  0.82$\pm0.00$ & 11.68$\pm$1.85& 0.66$\pm0.01$ & 14.26$\pm$1.05 & 0.28$\pm0.06$ & 66.84$\pm114.47$ \\
           iHOSCF & 0.82$\pm0.00$ & 7.42$\pm$1.16 & 0.66$\pm0.01$ & 9.38$\pm$1.07 & $0.26\pm0.07$ &$19.22\pm8.22$ \\
			\bottomrule
		\end{tabular}
	\end{center}}
\end{table}

\subsubsection{Example 3}
The goal of this example is to apply HOSCF to the greedy rank-one update algorithm for testing its convergence. The input tensor $\tensor{A}\in\mathbb{R}^{784\times5000\times10}$ is composed of the training dataset
from the MINST database \cite{MNIST}, where the first mode is the texel mode, the second mode corresponds to training images, and the third mode represents image categories.
We set the CP-rank $R$ to $5$, and let $\tensor{B}^{(r)}$ be the best rank-one approximation of $\tensor{A}^{(r)}$, where $\tensor{A}^{(r+1)} = \tensor{A}^{(r)}-\tensor{B}^{(r)}$ and $\tensor{A}^{(1)}=\tensor{A}$. The average computed residuals $\text{res}_{r}=\frac{\|\tensor{A}^{(r)}\|_F}{\|\tensor{A}^{(1)}\|_F}$ and the average number of iterations obtained by the tested algorithms with 50 initial guesses are reported in Table \ref{table:ex3-2}, and Figure \ref{fig:ex3-2} shows the first, third, and fifth texel factors.
%The greedy rank-one update algorithm \cite{devore1996some,leibovici1997decomposition} for computing the best rank-$R$ approximation of tensors is equivalent to solving a series of rank-one approximation problems.

From Table \ref{table:ex3-2} and Figure \ref{fig:ex3-2}, it can be seen that the convergence of GRQI is broken as the CP-rank increases.
Although Jacobi-HOPM and -ASVD yield similar residuals to $\texttt{tucker\_als}$, they require $1.25\times\sim5.31\times$ and $1.24\times\sim13.86\times$ more iterations than the proposed HOSCF and iHOSCF, respectively. It is worth mentioning that the residual obtained by HOSCF is the most accurate of all tested algorithms, its corresponding texel factors are shown in Figure \ref{fig:ex3-2} are consistent with $\texttt{tucker\_als}$.

\begin{table}%[H]
	\small
 \renewcommand\arraystretch{1.0}
 \setlength{\tabcolsep}{0.85mm}{
	\begin{center}
		\caption{\normalsize The computed residual and the number of iterations by all tested algorithms for Example 3.}\label{table:ex3-2}
		\begin{tabular}{c|c|c|c|c|c|c}
			\toprule
			\multirow{2}{*}{Algorithms} & \multicolumn{2}{c|}{$r=1$} & \multicolumn{2}{c|}{$r=3$} & \multicolumn{2}{c}{$r=5$}\\
			\cline{2-7}
            & $\text{res}_{r+1}$ & $\#$Iterations & $\text{res}_{r+1}$ & $\#$Iterations & $\text{res}_{r+1}$ & $\#$Iterations\\
            \midrule
            \texttt{tucker\_als} &  0.7663 & -- & 0.7247 & -- & 0.7006 & -- \\
            \midrule
            Jacobi-HOPM &  0.7663 & 6.32$\pm$0.47 & 0.7252 & 75.36$\pm$16.50 & 0.7009 & $92.92\pm37.84$ \\

            Jacobi-ASVD &  0.7663 & 6.26$\pm$0.44 & 0.7280 & 153.02$\pm$164.59 & 0.7095 &$242.28\pm195.45$ \\

            GRQI &  0.7663 & 3.98$\pm$0.14 & 0.7663 & 357.34$\pm$177.72 & 0.7663&$344.06\pm172.94$ \\
            \midrule
            HOSCF &  0.7663 & 5.04$\pm$0.20& 0.7247 & 19.58$\pm$3.57 & 0.7006 & 24.78$\pm6.15$ \\
           iHOSCF & 0.7663 & 4.00$\pm$0.00 & 0.7255 & 14.34$\pm$5.08 & 0.7013 &$17.48\pm7.10$ \\
			\bottomrule
		\end{tabular}
	\end{center}}
\end{table}

\begin{figure}%[H]
	\centering
	\includegraphics[width=1.0\hsize]{./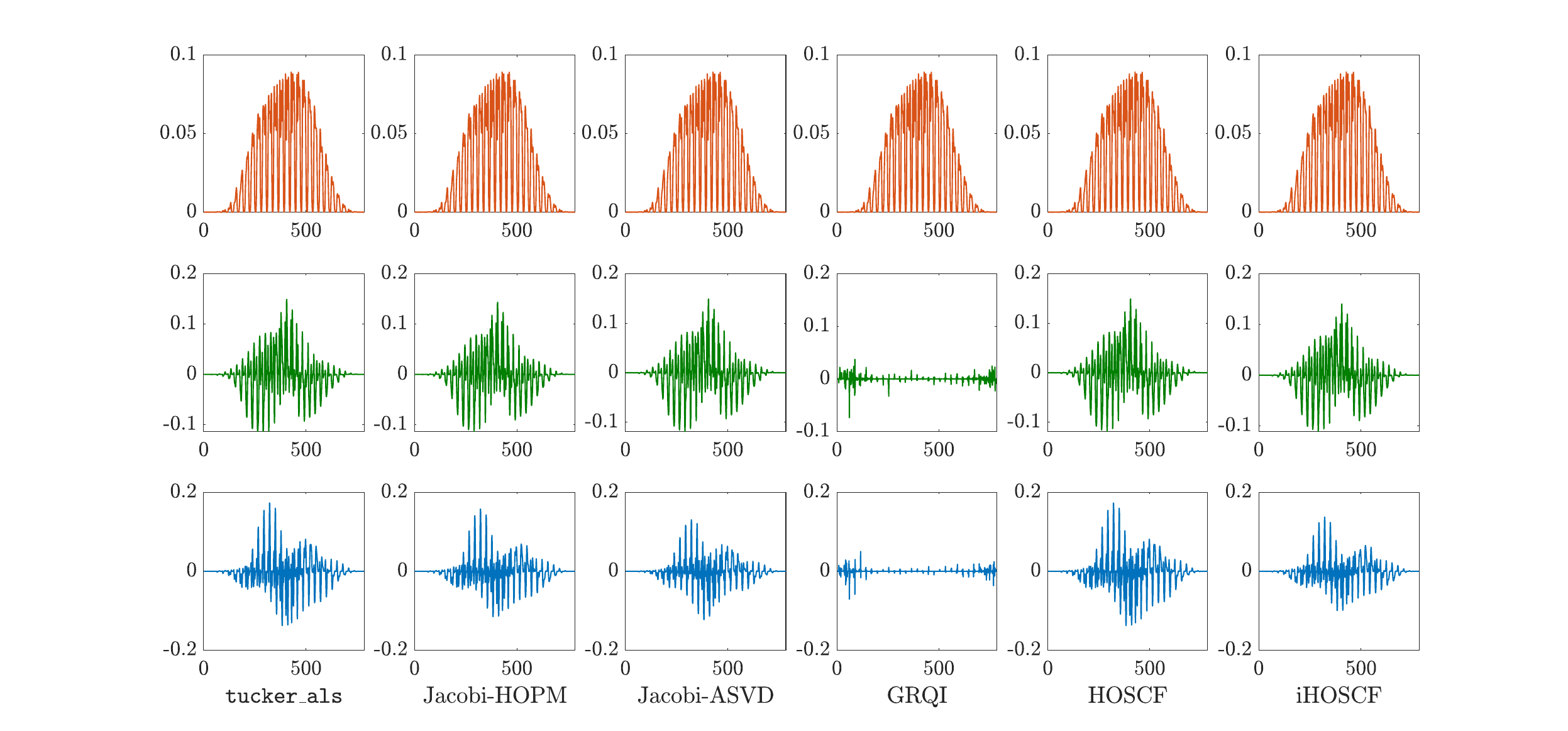}
	\caption{The first, third, and fifth texel factors by all tested algorithms for Example 3.}\label{fig:ex3-2}
\end{figure}

\subsection{Parallel scalability}

In the second part of the numerical experiments, we test the parallel scalability of the proposed HOSCF algorithm on several tensors of different orders and compare it with the most popular coupling algorithm, i.e., the HOPM algorithm. To ensure these input tensors generated randomly have the same scale ($16$ M), we set the sizes of the input tensors to ${32\times32\times32\times32\times16}$, ${16\times16\times16\times16\times16\times16}$, ${16\times16\times16\times16\times8\times8\times8}$, and $8\times8\times8\times8\times8\times8\times8\times8$, respectively.
The number of iterations is set to $10$ for both HOPM and HOSCF, and the recorded running time is the average value obtained after 10 repetitions.

%, and the warm-up time of first calling the MKL library is ignored.
We break down the running time of the HOSCF algorithm into three parts,
i.e., the construction of the intermediate matrix $\bm{J}(\bm{x}_{k})$, the calculation of the largest magnitude eigenpair of $\bm{J}(\bm{x}_{k})$, and others.
Table \ref{table:ex3} shows the time cost and proportion of each part in the HOSCF algorithm on a single processor core. {From this table, we can observe that the time cost to construct the intermediate matrix $\bm{J}(\bm{x}_{k})$ for tensors of different orders account for more than $99\%$, which means that it is the most time-consuming part of the HOSCF algorithm, matches the computational complexity analysis in Section \ref{sec:section4}.} Theoretically, the HOSCF algorithm is highly scalable on multi-core computers due to the high parallelism of the TTVc operations involved in the construction of $\bm{J}(\bm{x}_k)$.
%both in inter-ttvc and intra-ttvc.

\begin{table}%[H]
\renewcommand\arraystretch{1.0}
%\resizebox{\textwidth}{7nm}{
	\setlength{\tabcolsep}{1.0mm}{
	\begin{center}
		\caption{\normalsize Running time (s) and proportion ($\%$) of each part of the HOSCF algorithm in the case of a single processor core.}\label{table:ex3}
		\begin{tabular}{c|c|c|c|c}
			\toprule
			\multirow{2}{*}{Order of tensors} & \multicolumn{2}{c|}{Intermediate matrix $\bm{J}(\bm{x}_{k})$} & \multicolumn{2}{c}{Largest magnitude eigenpair of $\bm{J}(\bm{x}_{k})$} \\
   \cline{2-5}
  &  Running time & Proportion  & Running time  & Proportion \\
  \midrule
$d=5$ & 7.18 & 99.54 & 0.021 & 0.29 \\
$d=6$ & 10.07 & 99.77 & 0.011 & 0.11 \\
$d=7$ & 12.30 & 99.80 & 0.008 & 0.07 \\
$d=8$ & 15.82 & 99.85 & 0.006 & 0.04 \\
			\bottomrule
		\end{tabular}
	\end{center}}
\end{table}

\begin{figure}%[H]
	\centering	\includegraphics[width=1.00\textwidth,height=0.50\textwidth]{./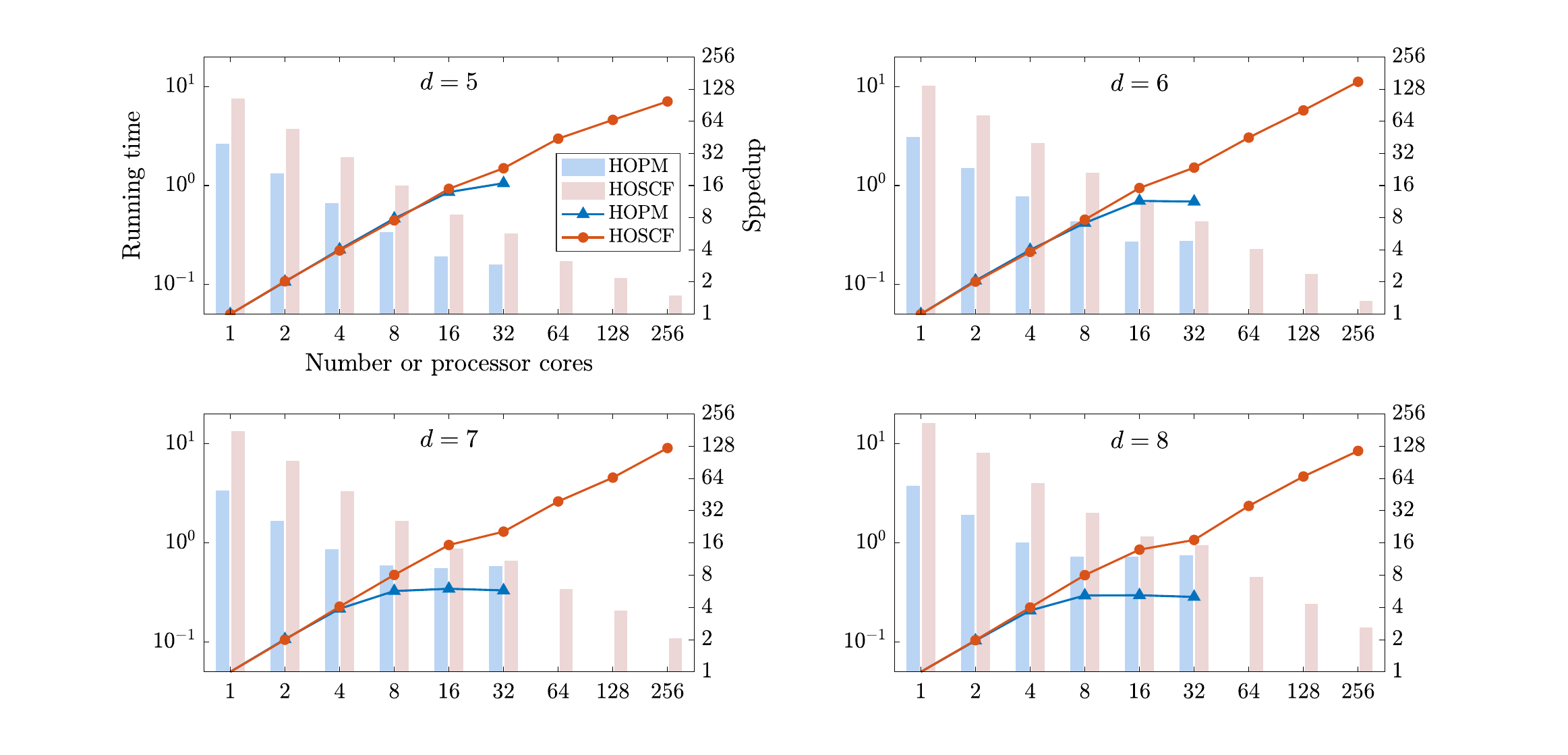}
	\caption{Running time (s) and speedup of the HOPM and HOSCF algorithms for tensors of different orders on a multi-core computer.}\label{fig:scability}
\end{figure}

Figure \ref{fig:scability} illustrates the running time and speedup of the HOPM and HOSCF algorithms as the number of processor cores increases from 1 to 256 (on 8 compute nodes). Since the TTVc operations in the HOPM algorithm are interdependent and lack enough parallelism, it cannot be further accelerated by increasing compute nodes, thus only the speedup of the HOPM algorithm with 1 to 32 cores on a single compute node is shown in the figure. %Although the HOPM algorithm is advantageous in terms of running time compared to HOSCF when the number of cores is small, it significantly underperforms HOSCF on multi-core computers due to its poor parallel scalability.
Although the HOSCF algorithm is slower than HOPM when the number of processor cores is small, the parallel scalability is much better. And more importantly, HOSCF can scale across multiple compute nodes where HOPM cannot.
Indeed, even on a single compute node, the scalability of the HOPM algorithm is poor, especially for higher-order tensors.
In contrast, the parallelism of the HOSCF algorithm is not only evident in a single TTVc operation but also in the independence of these TTVc operations from each other. Therefore, the HOSCF algorithm has good parallel scalability on multi-core computers. Specifically in this test, when the number of processor cores is increased from 1 to 256, the HOSCF algorithm can achieve speedups of $98.03\times$, $149.90\times$, $123.02\times$, and $115.73\times$ for tensors with the order $d=5,6,7,8$, respectively.
%Theoretically, the maximum number of nodes that the HOSCF algorithm can further scale to is equal to the number of TTVc operations, i.e., $(d-1)(d-2)/2$.
%Another observation is the parallel scalability of the HOSCF algorithm improves as the order $d$ increases, this is due to the fact that the proportion of the time cost to construct the intermediate matrix $\bm{J}(\bm{x}_{k})$ becomes higher.
%这句不要了，没说清楚Thanks to the strong parallel scalability of HOSCF, it eventually achieves $2.04\times\sim5.37\times$ performance improvement as the number of cores is increased to 256, which further demonstrates the advantages of the proposed HOSCF algorithm on modern multi-core computers.

\section{Conclusions}\label{sec:section7}
In this paper, we first pointed out that finding the best rank-one approximation of tensors can be reformulated as a special case of NEPv, and developed an efficient decoupling algorithm inspired by the SCF iteration to solve it. The established convergence theory illustrates that the proposed HOSCF algorithm is local $q$-linearly convergent, and we also provide an estimation of the convergence rate.
Further, combined with Rayleigh quotient iteration, we proposed an improved HOSCF algorithm, which significantly reduces the number of iterations of the HOSCF algorithm.
Compared with existing decoupling algorithms, numerical experiments show that the proposed algorithms can converge to the best rank-one approximation for both synthetic and real-world tensors. By implementing the HOSCF algorithm on a modern parallel computer, we observe that it has good parallel scalability and thus achieves high performance, which exhibits the advantage of decoupling algorithms. Possible future work may include conducting a more in-depth study on the convergence of HOSCF and developing more efficient acceleration techniques to further improve the convergence speed.

\section*{Acknowledgments}
C. Xiao has been supported by the Hunan Provincial Natural Science Foundation grant 2023JJ40005 and National Natural Science Foundation of China grant 12131002. C. Yang has been supported by National Natural Science Foundation of China grant 12131002.

%% The Appendices part is started with the command \appendix;
%% appendix sections are then done as normal sections
%% \appendix

%% \section{}
%% \label{}

%% If you have bibdatabase file and want bibtex to generate the
%% bibitems, please use
%%
\bibliographystyle{elsarticle-num}
%%  \bibliography{<your bibdatabase>}

%% else use the following coding to input the bibitems directly in the
%% TeX file.

\bibliography{hoscf.bib}

% \begin{thebibliography}{00}

% %% \bibitem{label}
% %% Text of bibliographic item

% \bibitem{}

% \end{thebibliography}
\end{document}